\newcommand{\logand}{\;\;{\rm and }\;\;}
\newcommand{\C}{\mathbb{C}}
\newcommand{\R}{\mathbb{R}}
\newcommand{\bigoh}{\mathop{\mathcal{O}}\nolimits}
\newcommand{\spec}{\;{\rm spec }\;}
\newtheorem{theorem}{Theorem}[section]
\newtheorem{lemma}[theorem]{Lemma}
\newtheorem{proposition}[theorem]{Proposition}
\newtheorem{corollary}[theorem]{Corollary}
\theoremstyle{definition}
\newtheorem{definition}{Definition}[section]
\renewcommand{\th}{\ensuremath{^{th}}}
\renewcommand{\Re}{\operatorname{Re}}
\renewcommand{\Im}{\operatorname{Im}}
\title{Signal Velocity in Oscillator Arrays}
\author{ Carlos E. Cantos \thanks{Maseeh Department of Mathematics \& Statistics, Portland State
University, Portland, OR. Email : ccantos@pdx.edu }
\and J. J. P. Veerman \thanks{
Maseeh Department of Mathematics \& Statistics, Portland State
University, Portland, OR, and CCQCN, Dept of Physics, University of Crete, Heraklion, Greece.
Email : \mbox{veerman@pdx.edu}
} 
\and David K. Hammond \thanks{
Applied Mathematics Department, Oregon Institute of Technology,
 Wilsonville, OR. Email : david.hammond@oit.edu}
}
\begin{document}
\maketitle


\begin{abstract} 
  We investigate a system of coupled oscillators on the circle, which
  arises from a simple model for behavior of large numbers of
  autonomous vehicles. The model considers asymmetric, linear,
  decentralized dynamics, where the acceleration of each vehicle
  depends on the relative positions and velocities between itself and
  a set of local neighbors.
  We first derive necessary and sufficient conditions for asymptotic
  stability, then derive expressions for the phase velocity of
  propagation of disturbances in velocity through this system. We show
  that the high frequencies exhibit damping, which implies existence of
  well-defined \emph{signal velocities} $c_+>0$ and $c_-<0$ such that
  low frequency disturbances travel through the flock as
  $f(x-c_+t)$ in the direction of increasing agent numbers and
  $f(x-c_-t)$ in the other.
\end{abstract}

\section{Introduction} 
\label{chap:zero}



This paper is part of a larger program to develop mathematical methods
to quantitatively study performance of models for flocking. The main underlying
motivation for the current work is to inform development of methods for
programming driverless cars to enable coherent motion at high speed, 
even under dense traffic conditions. This is obviously an important problem, 
not only because it can lead to enormous cost savings to have smooth 
and dense traffic on our busier highways, but also because failures may cost lives.

We study models that assume that each car is programmed identically and
that can observe relative velocities and positions of nearby
cars. In this work we take nearby to mean only the car in front and
behind. However the methods we develop will be applicable to larger
interactions (and these will be explored in future work).  We will
assume that the system is linearized.  Various examples and analyses
of nonlinear systems exist. But the emphasis here is on linear systems
where we can allow for many parameters (to take the neighbors into
account) and still perform a meaningful analysis.

There are two main aspects in our analysis. The first is the
asymptotic stability. This can be analyzed via the eigenvalues of the
matrix associated with the first order differential equation. Section
\ref{chap:stability} is devoted to establishing necessary and
sufficient conditions for a class of systems to be asymptotically
stable. Even though this is a fairly straightforward calculation, we
have not found it in this generality in the literature.  

The second, more delicate aspect of the problem is related to the fact
that we may have arbitrarily many cars following each other, hundreds
or even thousands. In this situation, even if all our systems are
known to be asymptotically stable, transients may still grow
exponentially in the number of cars. The spectrum of the linear
operator does not help us to recognize this problem (\cite{Tr}). A
dramatic example of this can be found in \cite{position} where
eigenvalues have real part bounded from above by a negative number and
yet transients grow exponentially in $N$. This kind of exponential
growth underscores the need for different (non-spectral) methods to
analyze these systems. The main result of our paper represents one 
such alternative approach. We establish that  for the parameter
values of interest (e.g. asymptotically stable systems), solutions are well
approximated by travelling wave signals with two distinct signal velocities, one
positive (in direction of increasing agent number) and one negative. 



Ever since the inception (\cite{HM1}, \cite{HM2}) of the subject,
systems with periodic boundary conditions have been popular
(\cite{Chu}, \cite{Co}, and \cite{BJP}) because they tend to be easier
to study.  
However the precise
connection between these systems and more realistic systems with
non-trivial boundary conditions has always been somewhat unclear.  Our
current program differs from earlier work in two crucial ways. The first
is that we make precise what the impact of our analysis is for the
(more realistic) systems on the line : namely in this paper we derive an 
expression for the velocity with which disturbances propagate in systems 
with periodic boundary, and in \cite{CV2} 
we numerically verify that this holds on the line as well. The second is that
we consider all possible nearest neighbor interactions: we do not
impose symmetries. This turns out to be of the utmost importance: when
we apply these ideas in \cite{CV2} it turns out that the systems with
the best performance are asymmetric.  Asymmetric systems (though not
the same as ours) have also been considered by \cite{BMH} and with
similar results.  However their methods are perturbative, and spectral
based. 
 In \cite{flock67} and
\cite{LFJ} asymmetric interactions are also studied, and it was shown
that in certain cases they may lead to exponential growth (in $N$) in
the perturbation.  In the later of these, the model is qualitatively
different because absolute velocity feedback is assumed (their method
is also perturbative and not global).  Signal velocities were employed
in earlier calculations namely \cite{WW} and \cite{MN}.  These
calculations have in common that they were done for
\emph{car-following} models. We are interested in a more general
framework, namely where automated pilots may pay attention \emph{also}
to their neighbor \emph{behind} them or indeed other cars further
afield.

Our model is \emph{strictly decentralized}. There are two reasons to
do that.  First, in high speed, high/density traffic, small
differences in measured absolute velocity may render that measurement
useless, if not dangerous, for the feedback.  Secondly, the desired
velocity, even on the highway, may not be constant. It will depend on
weather, time of day, condition of the road, and so on.  For these
reasons we limit ourselves to strictly \emph{decentralized} models
that only use information relative to the observers in the cars (see
\cite{flocks2} and \cite{position}).  Many authors study models
featuring a term proportional to velocity minus desired velocity (see
e.g. \cite{BJP}, \cite{BMH}, \cite{HM1}, \cite{Chu}, \cite{Co},
\cite{HM2}, and \cite{LFJ}).

\section{Flocking Model}
 \label{chap:definitions}


 We consider a model of a \emph{decentralized} flock of $N$ moving
 agents (e.g. cars), where each agent's acceleration depends linearly
 on on the differences between its own relative position and velocity,
 and those of some subset of neighbors. Letting $x_k$ be the position
 of the $k$\th agent, and $h_k$ its desired distance within the flock
 (typically $k$ times a fixed spacing $\delta$), the general linear
 decentralized flock satisfies
 \[
 \ddot x_k = \sum_{j\in \mathcal{N}_k} p_{j,k}( (x_k-h_k) - (x_j-h_j)) +
 v_{j,k}(\dot x_k -\dot x_j)
\]
where $\mathcal{N}_k$ is the set of neighbors for agent $k$, and $p_{j,k}$ and $v_{j,k}$ are 
the coefficients for how the difference of positions and velocities respectively between agent $k$ and $j$ 
affect the acceleration of agent $k$. The above
model is more general than that considered in this work, we restrict
ourselves to a leaderless decentralized flock with identical agents
and periodic boundary. These restrictions imply $p_{k,j}$ and
$v_{k,j}$ depend only on $j-k \mod N$, and that the neighborhood sets
$\mathcal{N}_k$ be shift invariant, e.g. $\mathcal{N}_{j+k} = \{
(j+i)\mod N : i \in \mathcal{N}_k \}$. We will also restrict ourselves to nearest neighbor systems.
To further simplify the resulting equations, we introduce the change of variables $z_k \equiv
x_k-h_k$ (see \cite{flocks2} for more details). We also introduce
constants $g_x$ and $g_v$, define $\rho_{x,j} = \tfrac{1}{g_x}
p_{j,0}$ for $j\neq 0$ and $\rho_{x,0}=\tfrac{1}{g_x} \sum_{j\neq 0}
p_{j,0}$ where all indices are treated mod $N$, and define
$\rho_{v,j}$ similarly. It will be convenient to allow negative
indices for $\rho_{x,j}$ by setting $\rho_{x,j+N}=\rho_{x,j}$,
similarly for $\rho_{v,j}$. In this notation, the flock equations
become the following :

\begin{definition} 
\label{defn:normalized system}
The system $S_N^*$ is given by the equation
\begin{equation}\label{eqn:normalized system}
\ddot z_k = g_x \sum_{j\in \mathcal{N}} \rho_{x,j} z_{k+j} + g_v
\sum_{j\in \mathcal{N}} \rho_{v,j}\dot z_{k+j} \equiv g_x\sum_{j=1}^N
L_{x,k,j} z_j + g_v\sum_{j=1}^N L_{v,k,j}\dot z_j
\end{equation}
where $\mathcal{N}=\{-1,0,1\}.$
%
The $N\times N$ matrices $L_x$ and $L_v$ defined implicitly above are
circulant matrices, as $L_{x,k,j}$ and $L_{v,k,j}$ depend only on $j-k
\mod N$. They also have row sums equal to 0, as the decentralized
condition has implied that 
\begin{equation} \label{eq:decentralized}
\sum_{j\in \mathcal{N}} \rho_{x,j} = \sum_{j\in\mathcal{N}} \rho_{v,j} = 0 . 
\end{equation}
We will accordingly refer to $L_x$ and $L_v$ as Laplacian matrices.
\end{definition}

{\noindent \bf Remark :} It is well known that circulant matrices have
orthogonal eigenbases, and are diagonalized by the discrete Fourier
transform (see \cite{KS}). This is the reason periodic boundary
conditions are so convenient.

It will be useful to write the equations of $S_N^*$ as a first order system:
\begin{equation}
\frac{d}{dt}\begin{pmatrix} z\\ \dot z \end{pmatrix}  =
M_N \begin{pmatrix} z\\ \dot z \end{pmatrix} \equiv
\begin{pmatrix} 0 & I \\ g_xL_x & g_vL_v \end{pmatrix} \begin{pmatrix} z\\ \dot z \end{pmatrix}
\label{eqn:first-order}
\end{equation}
This system has a 2-dimensional family of coherent solutions, namely:
\begin{equation*}
\forall \; i \quad z_i(t) = v_0t+x_0
\end{equation*}
where $v_0$ and $x_0$ are arbitrary elements of $\R$. These correspond
to the generalized eigenspace of $M_N$ for the eigenvalue 0.  It is easy to see
that all solutions converge to one of these coherent solutions if and
only if all other eigenvalues of $M_N$ have negative real part.  With
a slight abuse of notation we will call this case asymptotically
stable (see \cite{Ro} for precise definitions):

\begin{definition} The system in Equation \ref{eqn:first-order} is called
  asymptotically stable if it has a single eigenvalue equal to 0 with
  algebraic multiplicity 2, and all other eigenvalues have strictly
  negative real parts.
\label{defn:asympt}
\end{definition}

The discrete Fourier transform will play a fundamental role in our
analysis. We define  $\lambda_{x,m}$ and $\lambda_{x,m}$ as follows:
denote $\theta\equiv \frac{2\pi}{N}$ and set
\begin{equation}
\lambda_{x,m} \equiv g_x \sum_{j\in\mathcal{N}} \rho_{x,j}\, e^{ijm\theta} \quad \logand \quad
\lambda_{v,m} \equiv g_v \sum_{j\in\mathcal{N}} \rho_{v,j}\, e^{ijm\theta}
\label{eqn:the lambdas}
\end{equation}
%
Denote the vector $w_m$ by:
\begin{equation*}
 w_m \equiv \dfrac{1}{\sqrt{N}}\left(1, e^{im\theta}, e^{2im\theta}, \cdots e^{(N-1)im\theta} \right)^T
\end{equation*}
We furthermore define the moments of $g_x\rho_x$ and $g_v\rho_v$:
\begin{equation*}
I_{x\ell}\equiv g_x \sum_{j\in \mathcal{N}}\,\rho_{x,j}\, j^\ell \quad \logand \quad
I_{v\ell}\equiv g_v \sum_{j \in\mathcal{N}}\,\rho_{v,j}\, j^\ell
\end{equation*}
and observe that $\lambda_{x,m}$ can be expanded as 
\begin{equation}
\lambda_{x,m}  
               =  im\theta\; I_{x,1} - \dfrac{m^2\theta^2}{2}\;I_{x,2}-i\; \dfrac{m^3\theta^3}{3!}\;I_{x,3}
               +\dfrac{m^4\theta^4}{4!}\;I_{x,4}+i\;\dfrac{m^5\theta^5}{5!}\;I_{x,5} \cdots
\label{eq:lambda_expansion_in_moments}
\end{equation}
An analogous expansion for $\lambda_{v,m}$ can also be given.

\section{Asymptotic Stability}
 \label{chap:stability}

In this section we state and prove necessary and sufficient conditions for
nearest neighbor systems to be asymptotically stable.

\begin{proposition}
Let $L_x$ and $L_v$ be the Laplacians defined in
Definition \ref{defn:normalized system}. The eigenvalues of $g_xL_x$ are $\lambda_{x,m}$ with
associated eigenvector $w_m$ (where $m\in\{0,\cdots,N-1\}$). Similarly, $\lambda_{v,m}$ and $w_m$ form eigenpairs for $g_vL_{v}$.
\label{prop:evals Lapl}
\end{proposition}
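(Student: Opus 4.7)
The plan is to verify the eigenvalue equation $g_x L_x w_m = \lambda_{x,m} w_m$ componentwise by direct computation, exploiting the circulant/shift-invariant structure of $L_x$ and the fact that $w_m$ is a character of the cyclic group $\Z/N\Z$.

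First, I would read off from Equation \ref{eqn:normalized system} the explicit action of $g_x L_x$ on an arbitrary vector $z$, namely $(g_x L_x z)_k = g_x \sum_{j \in \mathcal{N}} \rho_{x,j}\, z_{k+j}$, where indices are taken mod $N$ via the convention $\rho_{x,j+N} = \rho_{x,j}$. Substituting $z = w_m$ gives
\begin{equation*}
(g_x L_x w_m)_k = g_x \sum_{j \in \mathcal{N}} \rho_{x,j}\, (w_m)_{k+j}.
\end{equation*}

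Second, I would use the multiplicative property of the exponential together with the periodicity $e^{iNm\theta} = e^{2\pi i m} = 1$ (which handles the wraparound in the index $k+j$) to factor
\begin{equation*}
(w_m)_{k+j} = \frac{1}{\sqrt{N}} e^{i(k+j)m\theta} = e^{ijm\theta} (w_m)_k.
\end{equation*}
Pulling the factor $(w_m)_k$ outside the sum yields
\begin{equation*}
(g_x L_x w_m)_k = \left( g_x \sum_{j \in \mathcal{N}} \rho_{x,j}\, e^{ijm\theta} \right) (w_m)_k = \lambda_{x,m} (w_m)_k,
\end{equation*}
by the definition of $\lambda_{x,m}$ in Equation \ref{eqn:the lambdas}. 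Since this holds for every $k$, $w_m$ is an eigenvector of $g_x L_x$ with eigenvalue $\lambda_{x,m}$. The identical argument with $\rho_x$ replaced by $\rho_v$ gives the statement for $g_v L_v$. Finally, since the $w_m$ for $m = 0, \ldots, N-1$ form an orthonormal basis of $\C^N$, these $N$ eigenpairs exhaust the spectrum.

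There is no real obstacle here; the proposition is a special case of the standard fact that circulant matrices are simultaneously diagonalized by the discrete Fourier basis, as noted in the Remark after Definition \ref{defn:normalized system}. The only mild care required is keeping track of the convention $\rho_{x,j+N} = \rho_{x,j}$ so that the shift $k \mapsto k+j$ is well-defined modulo $N$.
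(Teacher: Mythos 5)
Your proof is correct and follows the same underlying idea as the paper, which simply cites the standard fact (stated in the Remark after Definition \ref{defn:normalized system}) that circulant matrices are diagonalized by the discrete Fourier basis; you have merely written out explicitly the componentwise verification that the paper delegates to that remark. The computation, including the handling of the index wraparound via $\rho_{x,j+N}=\rho_{x,j}$ and $e^{iNm\theta}=1$, is sound.
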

\begin{proof}
This follows immediately from the previous remark as $L_x$ and $L_v$ are circulant matrices.
\end{proof}

{\noindent \bf Remark:} Even though $L_x$ and $L_v$ have bases of orthogonal
eigenvectors, $M_N$ does not. Instead, the eigenvectors of the $2N
\times 2N$ matrix $M_N$ lie within $N$ two-dimensional
subspaces which are orthogonal to each other. Each of these may be
spanned by two not necessarily orthogonal eigenvectors, or by an
eigenvector and a (Jordan) generalized eigenvector.  This is made
precise below:

\begin{proposition}
The eigenvalues $\nu_{m\pm}$ ($m\in\{0,\cdots N-1\}$) of $M$ are given by the solutions of
\begin{equation*}
\nu^2 -\lambda_{v,m}\nu -\lambda_{x,m}=0\quad \Rightarrow \quad
\nu_{m\pm} = \dfrac{\lambda_{v,m}}{2}\pm \sqrt{\dfrac{\lambda_{v,m}^2}{4}+\lambda_{x,m}}
\end{equation*}
with associated eigenvectors given by $\begin{pmatrix}
  w_m\\\nu_{m\pm} w_m \end{pmatrix}$.
\label{prop:2}
\end{proposition}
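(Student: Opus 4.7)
The plan is to reduce the $2N\times 2N$ eigenvalue problem for $M_N$ to a family of $N$ decoupled $2\times 2$ problems, one inside each invariant subspace $\mathrm{span}\{(w_m,0)^T,(0,w_m)^T\}$. This is possible because $L_x$ and $L_v$ are simultaneously diagonalized by the Fourier basis $\{w_m\}$ (Proposition~\ref{prop:evals Lapl}), so each of these two-dimensional subspaces is invariant under $M_N$.

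Concretely, I would make the ansatz that an eigenvector takes the form $(w_m,\nu\, w_m)^T$ for some scalar $\nu$, and compute
\[
M_N\begin{pmatrix} w_m \\ \nu\, w_m\end{pmatrix}
=\begin{pmatrix} \nu\, w_m \\ (\lambda_{x,m}+\nu\,\lambda_{v,m})\, w_m\end{pmatrix},
\]
using Proposition~\ref{prop:evals Lapl} to evaluate $g_xL_xw_m=\lambda_{x,m}w_m$ and $g_vL_vw_m=\lambda_{v,m}w_m$. Demanding that the right-hand side equal $\nu\,(w_m,\nu\,w_m)^T$ forces the scalar equation $\nu^2-\lambda_{v,m}\nu-\lambda_{x,m}=0$, whose roots are exactly the claimed $\nu_{m\pm}$. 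Each choice of sign then produces the asserted eigenvector $(w_m,\nu_{m\pm}w_m)^T$.

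Finally, I would argue that these exhaust the spectrum of $M_N$ by a counting argument: the $N$ Fourier modes each contribute a two-dimensional $M_N$-invariant subspace, these subspaces are mutually orthogonal (since the $w_m$ are), and together they span $\C^{2N}$; so the union of the roots of the $N$ quadratics gives all $2N$ eigenvalues counted with algebraic multiplicity. I do not expect any serious obstacle here: the only subtlety is that when the discriminant $\tfrac14\lambda_{v,m}^2+\lambda_{x,m}$ vanishes, the two prescribed eigenvectors coincide and the invariant plane is actually a Jordan block, but this does not affect the statement about eigenvalues and eigenvectors as written, and it is exactly the situation anticipated by the remark preceding the proposition.
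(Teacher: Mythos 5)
Your proof is correct, and it takes a slightly different route from the paper's on the one point that matters, namely why the roots of the $N$ quadratics exhaust the spectrum. The paper argues in the necessity direction: starting from an arbitrary eigenpair $\nu$, $(q,u)^T$ of $M_N$ it deduces $u=\nu q$ and $(g_xL_x+\nu g_vL_v)q=\nu^2 q$, then observes that $g_xL_x+\nu g_vL_v$ is itself a circulant matrix whose eigenvalues are $\lambda_{x,m}+\nu\lambda_{v,m}$ by Proposition \ref{prop:evals Lapl}, so every eigenvalue of $M_N$ satisfies $\nu^2=\lambda_{x,m}+\nu\lambda_{v,m}$ for some $m$; the eigenvector formula is then checked afterwards. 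You instead verify the ansatz first and obtain completeness structurally: the planes $\mathrm{span}\{(w_m,0)^T,(0,w_m)^T\}$ are $M_N$-invariant, mutually orthogonal, and span $\C^{2N}$, so $M_N$ block-diagonalizes into $N$ companion blocks $\bigl(\begin{smallmatrix}0&1\\ \lambda_{x,m}&\lambda_{v,m}\end{smallmatrix}\bigr)$ and the characteristic polynomial factors as the product of the quadratics, giving all $2N$ eigenvalues with algebraic multiplicity. The paper's argument is shorter and gets necessity without any spanning or counting step, at the price of the somewhat clever device of a $\nu$-dependent circulant; your block decomposition is more explicit about multiplicities and cleanly isolates the degenerate situation where the discriminant vanishes and the block is a Jordan block (e.g. $m=0$, which accounts for the double zero eigenvalue and the coherent solutions), which is exactly what the remark preceding the proposition alludes to. Both arguments rest on the same input, the simultaneous diagonalization of $g_xL_x$ and $g_vL_v$ by the Fourier vectors $w_m$.
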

\begin{proof}
Let  $\nu$ be an eigenvalue of $M_N$, with eigenvector
written as $\begin{pmatrix} q\\ u \end{pmatrix}$. Then
\begin{equation*}
\begin{pmatrix} 0 & I \\ g_xL_x & g_vL_v \end{pmatrix} 
\begin{pmatrix}  q \\ u \end{pmatrix} 
= \nu \begin{pmatrix}  q \\ u \end{pmatrix},
\end{equation*}
which implies first that $u=\nu q$ and then that $(g_xL_x+g_vL_v \nu )
q = \nu^2 q$. The latter shows that $\nu^2$ is an eigenvalue of the
circulant matrix $g_xL_x + \nu g_v L_v$, which from Proposition
\ref{prop:evals Lapl} has eigenvalues given by $\lambda_{x,m} + \nu
\lambda_{v,m}$, for $0\leq m \leq {N-1}$. This implies $\nu$ satisfies
 $\nu^2=\lambda_{x,m}+\nu \lambda_{v,m}$ for some $m$. Finally, letting
$\nu_{m\pm}$ be as above, it is straightforward to show
$\begin{pmatrix} w_m \\ \nu_{m\pm} w_m\end{pmatrix}$ are
eigenvectors with eigenvalue $\nu_{m\pm}$.
\end{proof}

\begin{figure}
\center
\includegraphics[height=2.5in]{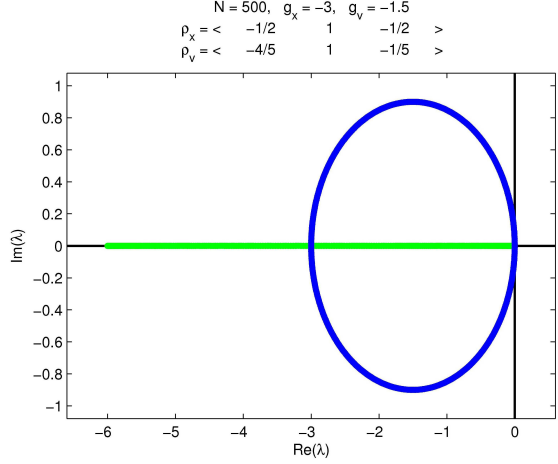}   
\caption{A representative figure for the calculation of the
  eigenvalues for 500 agents. The values of the parameters are given
  in the figures. Green line: $\lambda_{x,m}$, Blue ellipse: $\lambda_{v,m}$. 
  \label{fig:phasevelocities1}}
\end{figure}

\begin{definition}
Define $\lambda_x:S^1\rightarrow \C$ and $\lambda_v:S^1\rightarrow \C$ by
\begin{equation*}
\lambda_x(\phi) \equiv g_x \sum_{j\in\mathcal{N}}\,\rho_{x,j}\, e^{ij\phi} \logand
\lambda_v(\phi) \equiv g_v \sum_{j\in\mathcal{N}}\,\rho_{v,j}\, e^{ij\phi},
\end{equation*}
and define the curve $\gamma \subset \C$ to be the set of all $\nu \in \C$ satisfying
\begin{equation*}
\nu^2 -\lambda_v(\phi)\nu -\lambda_x(\phi)=0
\label{defn:eigencurve}
\end{equation*}
for some $\phi\in[0,2\pi]$.
\end{definition}

Note first that $\lambda_x$ and $\lambda_v$ are be independent of $N$
(provided $N$ is larger than half the size of the neighborhood
$\mathcal{N}$). This implies in turn that $\gamma$ depends on all
parameters of the flocking model except $N$.  As $N$ grows and the
remaining parameters are fixed, the eigenvalues of the associated
Laplacians tend to fill out curves denoted by $\lambda_x$ and
$\lambda_v$.  This is illustrated in Figure
\ref{fig:phasevelocities1}. The same holds for the eigenvalues of
$M_N$ (see Figure \ref{fig:phasevelocities2}), in particular :

\begin{corollary} The set $\spec(M_N)$ of eigenvalues of $M_N$ satisfies:
\begin{equation*}
\spec(M_N) \subset \gamma \quad \logand \quad \lim_{N\rightarrow\infty} \spec(M_N) = \gamma
\end{equation*}
where the second limit is taken relative to the Haussforff
metric $d_H$. Similar results hold for the spectra of $L_x$ and $L_v$ and
the images of the curves $\lambda_x$ and $\lambda_v$.
\label{cory:eigensets}
\end{corollary}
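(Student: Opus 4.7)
The plan is to prove the corollary by splitting it into two statements: the inclusion $\spec(M_N)\subset\gamma$, and the Hausdorff convergence $\spec(M_N)\to\gamma$. The inclusion is essentially a restatement of Proposition \ref{prop:2}. Indeed, any eigenvalue $\nu$ of $M_N$ is of the form $\nu_{m\pm}$ for some $m\in\{0,\ldots,N-1\}$, and therefore satisfies $\nu^2-\lambda_{v,m}\nu-\lambda_{x,m}=0$. Setting $\phi=m\theta=2\pi m/N\in[0,2\pi]$, we have $\lambda_{x,m}=\lambda_x(\phi)$ and $\lambda_{v,m}=\lambda_v(\phi)$ directly from the definitions, so $\nu$ lies on $\gamma$. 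The same reasoning applied to Proposition \ref{prop:evals Lapl} gives $\spec(g_xL_x)\subset \lambda_x([0,2\pi])$ and $\spec(g_vL_v)\subset\lambda_v([0,2\pi])$.

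For Hausdorff convergence, since $\spec(M_N)\subset\gamma$, the Hausdorff distance reduces to $\sup_{\mu\in\gamma}d(\mu,\spec(M_N))$. I would show that every point of $\gamma$ is approximated by eigenvalues. Write $\gamma=\Gamma_+([0,2\pi])\cup\Gamma_-([0,2\pi])$, where
\begin{equation*}
\Gamma_\pm(\phi)=\frac{\lambda_v(\phi)}{2}\pm\sqrt{\frac{\lambda_v(\phi)^2}{4}+\lambda_x(\phi)}.
\end{equation*}
The functions $\lambda_x,\lambda_v:[0,2\pi]\to\C$ are continuous (they are trigonometric polynomials), and the roots of a monic quadratic depend continuously on its coefficients; branch cuts of the square root are irrelevant here because we take the union of both sheets. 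Thus the set-valued map $\phi\mapsto\{\Gamma_+(\phi),\Gamma_-(\phi)\}$ is continuous, and $\gamma$ is a compact subset of $\C$ parametrized continuously by $\phi\in[0,2\pi]$. Given any $\mu\in\gamma$, say $\mu=\Gamma_\pm(\phi^*)$, choose $m_N\in\{0,\ldots,N-1\}$ with $|2\pi m_N/N-\phi^*|\le \pi/N$. Then $\nu_{m_N,\pm}=\Gamma_\pm(2\pi m_N/N)\in\spec(M_N)$, and by uniform continuity of $\Gamma_\pm$ on $[0,2\pi]$, $\nu_{m_N,\pm}\to\mu$ as $N\to\infty$. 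Since $\gamma$ is compact, this pointwise approximation is uniform, yielding $d_H(\spec(M_N),\gamma)\to 0$.

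For the Laplacian spectra, the same argument applies more simply, since the eigenvalues are directly $\lambda_x(2\pi m/N)$ and $\lambda_v(2\pi m/N)$ and these sample the continuous curves $\lambda_x([0,2\pi])$ and $\lambda_v([0,2\pi])$ on an increasingly dense grid in the parameter $\phi$.

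No step here is really difficult; the only mild subtlety is handling the $\pm$ branches of the square root near points where the discriminant $\lambda_v(\phi)^2/4+\lambda_x(\phi)$ vanishes. I would sidestep this by working with the unordered pair of roots (or equivalently, with the variety $\{(\nu,\phi):\nu^2-\lambda_v(\phi)\nu-\lambda_x(\phi)=0\}$ projected to $\C$), which depends continuously on $\phi$ in the Hausdorff sense on subsets of $\C$, so that no explicit choice of branch is required.
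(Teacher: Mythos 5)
Your proposal is correct and follows essentially the same route as the paper: the inclusion is read off from Proposition \ref{prop:2} by setting $\phi=m\theta$, and the Hausdorff convergence is obtained from (uniform) continuity of the set-valued root map $\phi\mapsto\{\nu:\nu^2-\lambda_v(\phi)\nu-\lambda_x(\phi)=0\}$ together with density of the sample points $2\pi m/N$ in $[0,2\pi]$ --- which is exactly the paper's argument, including your handling of the branch ambiguity via the unordered pair of roots.
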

\begin{proof}
  For any $\nu_{m\pm}$, setting $\phi=m\theta$ in Definition
  \ref{defn:eigencurve} gives $\lambda_v(\phi) = \lambda_{v,m}$ and
  $\lambda_x(\phi) = \lambda_{x,m}$ which shows $\nu_{m\pm} \in
  \gamma$. 

  The second assertion is equivalent to showing $\lim_{N\to\infty}
  d_H(\gamma,M_N)=0$.  Let $g$ be the set-valued function from
  $[0,2\pi]$ given by $g(\phi) = \{\nu : \nu^2 - \lambda_{v}(\phi) v -
  \lambda_x(\phi)=0\}$, so that $\gamma = \cup_{\phi\in[0,2\pi]}
  g(\phi)$. As $\lambda_v$ and $\lambda_x$ are continuous, and roots of
  polynomials depend continuously on their coefficients, $g$ is
  continuous. As its domain is compact, it is uniformly
  continuous. Now fix $\epsilon>0$. For any $\nu \in \gamma$, $\nu\in
  g(\phi)$ for some $\phi$, there is $\delta>0$ (independent of
  $\phi$) so that $|\phi'-\phi| < \delta \implies
  d_H(g(\phi'),g(\phi))<\epsilon$. We may take $N$ large enough so
  that $|\tfrac{k}{2\pi N}-\phi|<\delta$ for some $k$. As
  $g(\tfrac{k}{2\pi N})\subset \spec(M_N)$, this implies there are
  points in $\spec(M_N)$ that are distance less than $\epsilon$ from
  $\nu$. As $\nu$ was arbitrary, and we already have
  $\spec(M_N)\subset \gamma$, this implies $d_H(\gamma,\spec(M_N)) <
  \epsilon$, which suffices to prove the desired limit.
\end{proof}

\begin{figure}
\center
\includegraphics[height=2.5in]{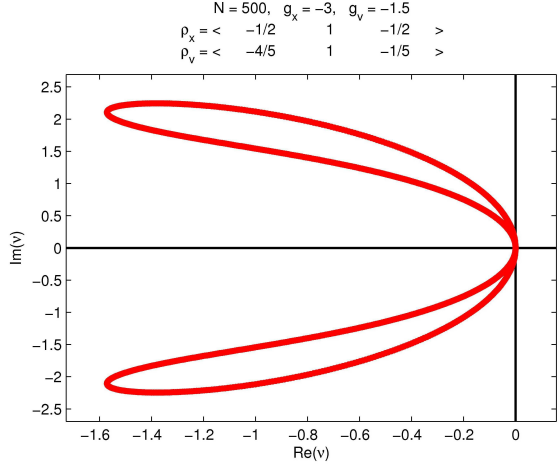}   
\caption{ A representative figure for the calculation of the
  eigenvalues for 500 agents.  The values of the parameters are given
  in the figures. The eigenvalues of $M_N$ of Proposition
  \ref{prop:2}.
\label{fig:phasevelocities2}}
\end{figure}

We next present a necessary condition for asymptotic stability.
\begin{proposition} 
  \label{prop:Ix1=0}
  If $I_{x,1}\neq 0$, then for sufficiently large $N$ the system
  $S^*_N$ in Definition \ref{defn:normalized system} is not asymptotically
  stable.
\end{proposition}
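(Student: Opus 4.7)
The plan is to prove instability by exhibiting, for large $N$, an eigenvalue of $M_N$ whose real part is strictly positive. I focus on the lowest nontrivial Fourier mode $m=1$, and use the moment expansion \ref{eq:lambda_expansion_in_moments} to track the behavior of $\nu_{1\pm}$ from Proposition \ref{prop:2} as $\theta = 2\pi/N \to 0$.

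From \ref{eq:lambda_expansion_in_moments} applied at $m=1$,
\begin{equation*}
\lambda_{x,1} = i\theta I_{x,1} + O(\theta^2), \qquad \lambda_{v,1} = i\theta I_{v,1} + O(\theta^2),
\end{equation*}
so $\lambda_{v,1}^2/4 = O(\theta^2)$ is negligible compared to the $O(\theta)$ quantity $\lambda_{x,1}$ whenever $I_{x,1}\neq 0$. Hence
\begin{equation*}
\frac{\lambda_{v,1}^2}{4} + \lambda_{x,1} = i\theta I_{x,1}\bigl(1 + O(\theta)\bigr),
\end{equation*}
and its square root equals $\sqrt{iI_{x,1}}\cdot\sqrt{\theta}\cdot(1 + O(\theta))$. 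A direct check gives $\sqrt{iI_{x,1}} = \sqrt{|I_{x,1}|/2}\,(1 + i\operatorname{sgn}(I_{x,1}))$, whose real part is $\sqrt{|I_{x,1}|/2} > 0$; the other branch has the opposite sign.

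Meanwhile $\operatorname{Re}(\lambda_{v,1}/2) = O(\theta^2)$, since the leading-order term $i\theta I_{v,1}/2$ is purely imaginary. Combining these estimates,
\begin{equation*}
\operatorname{Re}(\nu_{1\pm}) = \pm\sqrt{|I_{x,1}|\theta/2} + O(\theta),
\end{equation*}
where the two signs correspond to the two branches of the square root. For $N$ sufficiently large the leading $\sqrt{\theta}$ term dominates the $O(\theta)$ correction, so one of $\nu_{1+},\nu_{1-}$ has strictly positive real part, contradicting Definition \ref{defn:asympt}.

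The main obstacle is the careful bookkeeping of error terms so as to ensure the $\sqrt{\theta}$ contribution from the square root is not cancelled by higher-order corrections. Note the sign of $I_{x,1}$ is immaterial: what matters is only that $I_{x,1}\neq 0$, so that $iI_{x,1}$ lies strictly off the real axis and hence its square root has nonzero real part.
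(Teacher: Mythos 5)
Your proposal is correct and follows essentially the same route as the paper's proof: both extract the leading behavior $\nu_{\pm}\approx\pm\sqrt{i\phi I_{x,1}}$ of the low-mode eigenvalues (you specialize to $\phi=\theta=2\pi/N$, i.e.\ $m=1$, while the paper works with general small $\phi$), observe that one branch has real part of order $\sqrt{\theta}$, and note this dominates the $O(\theta)$ corrections from $\lambda_v$ and higher moments. Your explicit treatment of $\operatorname{sgn}(I_{x,1})$ is a minor point of extra care, but the argument is the same.
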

\begin{proof}

We will first show that for large enough $N$ and small enough $m$, the eigenvalues $\nu_{m\pm}$ 
are close to $\pm \sqrt{\lambda_x(\tfrac{2\pi m}{N})}$, which will have positive real part for one branch.

  We may write the expansions $\lambda_{x}(\phi) = i\phi I_{x,1}
  -\tfrac{\phi^2}{2}I_{x,2} - ...$ and $\lambda_{v}(\phi) = i\phi
  I_{v,1} -\tfrac{\phi^2}{2}I_{v,2} - ...$ similar to equation
  (\ref{eq:lambda_expansion_in_moments}). If $I_{x,1}\neq 0$, these
  imply existence of constants $A$, $B$, and $\delta_1>0$ so that
  $|\phi|<\delta_1$ implies both $|\lambda_{x}(\phi)| \geq B|\phi|$ and
  $|\lambda_{v}(\phi)| \leq A|\phi|$. Set $\nu_{\pm}(\phi) =
  \tfrac{\lambda_v(\phi)}{2} \pm \sqrt{\tfrac{\lambda_v^2(\phi)}{4} + \lambda_x(\phi)}$
(so that $\nu_{m\pm}=\nu_\pm(\tfrac{2\pi m}{N})$, as in Proposition \ref{prop:2}).
  By taking $|\phi| < \delta_2$ we may ensure $|\sqrt{\lambda_x +
    \lambda_v^2/4}-\sqrt{\lambda_x}| < \tfrac{1}{4}
  |\sqrt{\lambda_x}|$. Then $|\nu_{\pm}(\phi) -
  \pm\sqrt{\lambda_x(\phi)}|=|\lambda_v(\phi)/2\pm(\sqrt{\lambda_v^2/4+\lambda_x}-\sqrt{\lambda_x})|
  \leq \tfrac{1}{4}|\lambda_x| + \tfrac{1}{2}A|\phi|$. As
  $|\sqrt{\lambda_x}|\geq \sqrt{B |\phi|}$, by taking
  $|\phi|<\delta_3$ we can ensure $|\nu_{\pm}(\phi) -
  \pm\sqrt{\lambda_x(\phi)}| \leq \tfrac{1}{2}|\sqrt{\lambda_x}|$.

  For $|\phi|<\delta_4$ we can similarly derive the estimate
  $|\pm\sqrt{\lambda_x(\phi)} - \pm\sqrt{i\phi I_{x,1}}| \leq
  \sqrt{i\phi I_{x,1}} C |\phi|$ for some constant $C$. Now as
  $\pm\sqrt{i\phi I_{x,1}} = \pm \sqrt{\phi I_{x,1}}
  (\tfrac{\sqrt{2}}{2} + i \tfrac{\sqrt{2}}{2})$, at least one of
  these branches must have positive real part. The previous estimates
  imply that for $|\phi|<\min(\delta_1,\delta_2,\delta_3,\delta_4)$, one of the branches of
  $\nu_{\pm}(\phi)$ must have positive real part. Thus for sufficiently
  large $N$, there are eigenvalues of $M_N$ with positive real part, so $S_N^*$ is not
  asymptotically stable.
\end{proof}

In the remainder of this section we use a global method to determine a better
condition for asymptotic stability for nearest neighbor systems.

\begin{proposition} The system $S_N^*$ of Definition \ref{defn:normalized system} is asymptotically stable for
all $N$ (all other parameters fixed) if $\rho_{x,j}$ is symmetric, and 
\begin{equation*}
\forall \; \phi\neq 0 \; : \; \Re(\lambda_x(\phi))<0 \logand
\Re(\lambda_v(\phi))<0 .
\end{equation*}
Instability will occur for large $N$ if either of the opposite inequalities holds for some
$\phi \neq 0$.
\label{prop:Routh}
\end{proposition}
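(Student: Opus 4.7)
The plan is to apply the explicit formula for the eigenvalues $\nu_{m\pm}$ from Proposition \ref{prop:2} and verify directly that, under the stated hypotheses, both branches have strictly negative real part for every $m\neq 0$.

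First, I would use the symmetry of $\rho_{x,j}$ to show that $\lambda_x$ takes only real values. Since $\mathcal{N}=\{-1,0,1\}$ and $\rho_{x,-1}=\rho_{x,1}$, one has $\lambda_x(\phi)=g_x(\rho_{x,0}+2\rho_{x,1}\cos\phi)$, and the decentralized constraint (\ref{eq:decentralized}) forces $\rho_{x,0}=-2\rho_{x,1}$, so $\lambda_x(\phi)=-4g_x\rho_{x,1}\sin^2(\phi/2)\in\R$. Thus the hypothesis $\Re(\lambda_x(\phi))<0$ is equivalent to $\lambda_x(\phi)<0$ for $\phi\neq 0$. The value $\phi=0$ gives $\lambda_x(0)=\lambda_v(0)=0$, producing the double zero eigenvalue required by Definition \ref{defn:asympt}.

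Next, fix $m\neq 0$ and write $\lambda_{v,m}=a+ib$ with $a<0$ by hypothesis, and $\lambda_{x,m}=c<0$. Let $w$ denote the principal square root of $\lambda_v^2/4+\lambda_x$, chosen with $\Re(w)\geq 0$. Then $\Re(\nu_{m-})=a/2-\Re(w)<0$ is immediate. For $\nu_{m+}$, I would reduce the inequality $\Re(w)<|a|/2$ to $|w^2|+\Re(w^2)<a^2/2$ via the identity $\Re(w)^2=\tfrac{1}{2}(|w^2|+\Re(w^2))$. Computing $\Re(w^2)=(a^2-b^2)/4+c$ and $\Im(w^2)=ab/2$, I would first verify $a^2/2-\Re(w^2)=(a^2+b^2)/4-c>0$ so that squaring preserves the inequality, and then show by direct simplification that the resulting polynomial inequality collapses to $-a^2c>0$, which holds since $a\neq 0$ and $c<0$.

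For the instability direction, suppose one of the opposite inequalities is strict at some $\phi_0\neq 0$. Continuity of $\lambda_x,\lambda_v$ together with the density of $\{2\pi m/N:0\leq m<N\}$ in $[0,2\pi]$ as $N\to\infty$ supplies indices $m$ for which $\Re(\lambda_{v,m})>0$ or $\lambda_{x,m}>0$, respectively. In the first case, $\Re(\nu_{m+})+\Re(\nu_{m-})=\Re(\lambda_{v,m})>0$ forces at least one eigenvalue into the open right half-plane. In the second case, $\nu_{m+}\nu_{m-}=-\lambda_{x,m}$ is a negative real number; writing the roots in polar form shows their arguments must sum to $\pi\pmod{2\pi}$, from which one concludes that their real parts have opposite signs or are both zero, and either possibility precludes asymptotic stability.

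The main obstacle is the algebraic verification in the stability direction. Although the argument ultimately collapses to the transparent condition $-a^2c>0$, the reduction requires a careful handling of the principal branch of the square root and the positivity check on $a^2/2-\Re(w^2)$ before squaring. A secondary subtlety is the instability case with $\lambda_{x,m}>0$ and $\Re(\lambda_{v,m})=0$, where one must explicitly rule out a purely imaginary pair of roots (which would still violate Definition \ref{defn:asympt}) to conclude the argument.
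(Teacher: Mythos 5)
Your argument is correct, and it reaches the conclusion by a genuinely different route than the paper. The paper's proof simply invokes the Routh--Hurwitz criterion for the quadratic $\nu^2-\lambda_{v,m}\nu-\lambda_{x,m}=0$ with complex coefficients (citing \cite{Laff}), writes down the four resulting inequalities, and observes that symmetry of $\rho_{x,j}$ (i.e. $\Im(\lambda_{x,m})=0$) collapses them to $\Re(\lambda_{x,m})<0$ and $\Re(\lambda_{v,m})<0$; the density/continuity step for instability is the same as yours. You instead verify the root locations by hand: the branch $\lambda_{v,m}/2-w$ is immediate, and for $\lambda_{v,m}/2+w$ your reduction via $\Re(w)^2=\tfrac12(|w^2|+\Re(w^2))$, the positivity check $(a^2+b^2)/4-c>0$ before squaring, and the collapse to $-a^2c>0$ all check out; your Vieta-style instability argument (sum of roots for $\Re(\lambda_{v,m})>0$, product of roots for $\lambda_{x,m}>0$) is also sound, and you are right that the purely imaginary pair must be ruled out separately under Definition \ref{defn:asympt} --- a point the paper's terse ``in which case $S_N^*$ is not asymptotically stable'' leaves implicit, since it is absorbed into the if-and-only-if form of Routh--Hurwitz. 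What each approach buys: yours is self-contained and elementary, needing no external stability criterion, and it makes the borderline cases explicit; the paper's is shorter and scales better, since the four unreduced Routh--Hurwitz conditions remain usable without the symmetry assumption on $\rho_{x,j}$ and for larger neighborhoods. Two cosmetic remarks: you only need symmetry to conclude $\lambda_x(\phi)\in\R$ (your explicit nearest-neighbor formula $\lambda_x(\phi)=g_x\rho_{x,0}(1-\cos\phi)$ is fine here but not essential), and in the first instability case your sum-of-roots argument does not even use symmetry, which is slightly more than the statement requires.
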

\begin{proof}
  By the Routh-Hurwitz criterion applied to the equation
  $\nu^2-\lambda_{v,m}\nu -\lambda_{x,m}=0$ with complex coefficients
  (see \cite{Laff}), we see that all nonzero eigenvalues given in
  Proposition \ref{prop:2} all have negative real parts if and only if
  for all $m\in \{1,\cdots N-1\}$ we have (in the notation of Equation
  \ref{eq:lambda_expansion_in_moments}):
\begin{eqnarray*}
\Re(\lambda_{v,m}) &<& 0\\
2 \Re(\lambda_{x,m}) &<&|\lambda_{v,m}|^2 \\
\Re(\lambda_{x,m}) \Re(\lambda_{v,m}) + \Im(\lambda_{x,m}) \Im(\lambda_{v,m}) &>& 0 \\
\Re(\lambda_{x,m}) [\Re(\lambda_{v,m})]^2 + \Re(\lambda_{v,m})
\Im(\lambda_{x,m}) \Im(\lambda_{v,m}) + [\Im(\lambda_{x,m})]^2 &<& 0 
\end{eqnarray*}
If $\rho_{x,j}$ are symmetric then $\Im(\lambda_{x,m})$ is zero for
all $m$. In this case the Routh-Hurwitz conditions reduce to: for
all $m\in \{1,\cdots N-1\}$, $\Re(\lambda_{x,m})<0$ and
$\Re(\lambda_{v,m})<0$.  As $\lambda_{x,m}=\lambda_x(\tfrac{2\pi
  m}{N})$ and $\lambda_{v,m}=\lambda_v(\tfrac{2\pi m}{N})$, stability
for all $N$ follows if $\Re(\lambda_x(\phi))<0 \logand
\Re(\lambda_v(\phi))<0$ for all $\phi \neq 0$ (the case of $\phi=0$
is excluded because the zero eigenvalue is excluded from our
definition of asymptotic stability ).

On the other hand, if either $\Re(\lambda_x(\phi))>0$  or
$\Re(\lambda_v(\phi))>0$  for some $\phi \neq 0$, then as the set of points
$\{\tfrac{2\pi m}{N}\}$ for all $N>0$ and $1\leq m \leq N$ is dense in
$[0, 2\pi]$, and the functions $\lambda_x$ and $\lambda_v$ are
continuous, then there must be some $N$ and $m$ so that either
$\Re(\lambda_{x,m})>0$ or $\Re(\lambda_{v,m})>0$, in which case
$S_N^*$ is not asymptotically stable. \end{proof}

We are now in a position to state and prove the main theorem of this
section. Recall that we identify $\rho_{x,-1}$ and $\rho_{v,-1}$ with
$\rho_{x,N-1}$ and $\rho_{v,N-1}$ in Definition \ref{defn:normalized
  system}.

\begin{theorem} Suppose $S_N^*$ is as defined in Definition
  \ref{defn:normalized system}, with $\mathcal{N}=\{-1,0,1\}$. Then
$S_N^*$ is asymptotically stable for all $N$ if and only if
$\rho_{x,-1}=\rho_{x,1}$, $g_x \rho_{x,0}< 0$, and $g_v \rho_{v,0} < 0$.
\label{theo:main1}
\end{theorem}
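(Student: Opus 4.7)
The plan is to reduce this theorem to a mechanical application of Propositions \ref{prop:Ix1=0} and \ref{prop:Routh}, once the very restrictive nearest-neighbor structure has been exploited. The decentralized condition of Definition \ref{defn:normalized system} forces $\rho_{x,-1}+\rho_{x,0}+\rho_{x,1}=0$ and the analogous identity for $\rho_v$, which will make the expressions for $\lambda_x(\phi)$ and $\lambda_v(\phi)$ collapse to closed forms involving $\cos\phi$.

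First, I would dispatch the symmetry condition. A direct computation from the definition of the moments gives $I_{x,1}=g_x(\rho_{x,1}-\rho_{x,-1})$, so if $\rho_{x,-1}\neq\rho_{x,1}$ then $I_{x,1}\neq 0$ and Proposition \ref{prop:Ix1=0} already rules out asymptotic stability for large $N$. Assuming henceforth that $\rho_{x,-1}=\rho_{x,1}=-\rho_{x,0}/2$, substitution into the definitions of $\lambda_x$ and $\lambda_v$ yields
\begin{equation*}
\lambda_x(\phi)=g_x\rho_{x,0}(1-\cos\phi),\qquad \Re(\lambda_v(\phi))=g_v\rho_{v,0}(1-\cos\phi),
\end{equation*}
with the first equation already real-valued and the second following from $\rho_{v,-1}+\rho_{v,1}=-\rho_{v,0}$. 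Sufficiency is then immediate: if $g_x\rho_{x,0}<0$ and $g_v\rho_{v,0}<0$ then both $\Re(\lambda_x(\phi))$ and $\Re(\lambda_v(\phi))$ are strictly negative for every $\phi\in(0,2\pi)$ (since $1-\cos\phi>0$ there), $\rho_x$ is symmetric, and Proposition \ref{prop:Routh} gives asymptotic stability for all $N$. Conversely, if $g_x\rho_{x,0}>0$ or $g_v\rho_{v,0}>0$, the same closed forms show the opposite strict inequality holds for every $\phi\neq 0$, and the instability clause of Proposition \ref{prop:Routh} applies.

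The only points needing any genuine care, and the main obstacle, are the boundary cases $g_x\rho_{x,0}=0$ and $g_v\rho_{v,0}=0$, which are not covered by the strict-inequality clauses of Proposition \ref{prop:Routh}. In the first case (combined with the already-established symmetry and decentralization), all three of $\rho_{x,-1},\rho_{x,0},\rho_{x,1}$ vanish, so $\lambda_{x,m}=0$ for every $m$; Proposition \ref{prop:2} then gives $\nu_{m-}=0$ for each $m$, producing far more zero eigenvalues than Definition \ref{defn:asympt} allows. In the second case, $\rho_{v,0}=0$ forces $\rho_{v,-1}=-\rho_{v,1}$, so $\lambda_v(\phi)$ is purely imaginary while $\lambda_x(\phi)$ is real and non-positive; then $\lambda_v^2/4+\lambda_x$ is real and non-positive, its square root is purely imaginary, and the quadratic formula in Proposition \ref{prop:2} shows every $\nu_{m\pm}$ is purely imaginary. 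In either sub-case the system fails to be asymptotically stable, completing the necessity direction.
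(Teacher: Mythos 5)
Your proof is correct and follows essentially the same route as the paper's: Proposition \ref{prop:Ix1=0} forces $\rho_{x,-1}=\rho_{x,1}$, and the closed forms $\Re(\lambda_x(\phi))=g_x\rho_{x,0}(1-\cos\phi)$ and $\Re(\lambda_v(\phi))=g_v\rho_{v,0}(1-\cos\phi)$ reduce both implications to Proposition \ref{prop:Routh}. Your explicit handling of the boundary cases $g_x\rho_{x,0}=0$ and $g_v\rho_{v,0}=0$ (which the strict-inequality clauses of Proposition \ref{prop:Routh} do not cover, and which the paper's own proof passes over in silence) is additional care on top of the same argument, not a different approach.
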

\begin{proof}
  Let $S_N^*$ be asymptotically stable for all $N$. First, Proposition
  \ref{prop:Ix1=0} implies $I_{x,1}=0$, which for
  $\mathcal{N}=\{-1,0,1\}$ implies $\rho_{x,-1}=\rho_{x,1}$.  Next,
  Equation \ref{eq:decentralized} implies that $\rho_{x,0} = -
  (\rho_{x,1}+\rho_{x,-1})$, which implies
\begin{align*}
\Re(\lambda_x(\phi))&=\Re(g_x(e^{-i\phi} \rho_{x,-1} + \rho_{x,0} +
e^{i\phi} \rho_{x,1}))\\
&= g_x \rho_{x,0}( 1-\cos(\phi))
\end{align*}
Similarly $\Re(\lambda_v)=g_v \rho_{v,0}( 1-\cos(\phi))$. As
$(1-\cos(\theta)) > 0$ for $\phi \neq 0$, Proposition \ref{prop:Routh}
implies we must have $g_v \rho_{v,0}<0$ and $g_x \rho_{x,0}<0$.


To prove the other direction, let $\rho_{x,-1}=\rho_{x,1}$, $g_x
\rho_{x,0}<0$ and $g_v \rho_{v,0}<0$. The same calculation as above
shows $\Re(\lambda_x(\phi))<0$ and $\Re(\lambda_v(\phi))<0$ for
$\phi\neq 0$, then Proposition \ref{prop:Routh} implies $S_N^*$ is
asymptotically stable.
\end{proof}

\section{Signal Velocities} \label{chap:signal}

The main result of this section is the determination of the signal
velocity in asymptotically stable systems as characterized in Theorem
\ref{theo:main1}.  The signal velocity is the velocity with which
disturbances (such as a short pulse) propagate
through the flock. In general, signal velocities in dispersive media
may be difficult to determine.  The reason is that a pulse consists of
a superposition of plane waves, typically each with a different phase
velocity. 
If the component plane waves have different phase velocities, the
pulse may spread out over time (dispersion), and the determination of
arrival time of the signal may becomes problematic. For details we
refer to \cite{Brillouin}.

For nearest neighbor systems of Definition \ref{defn:normalized system} we define:
\begin{equation*}
a\equiv \dfrac{I_{v,1}^2}{4} +\dfrac{I_{x,2}}{2}=
\dfrac{(\rho_{v,0}+2\rho_{v,1})^2g_v^2}{4}+\dfrac{-g_x\rho_{x,0}}{2}
\end{equation*}

{\noindent \bf Remark:} From now on we will restrict our attention to (stable) systems satisfying the
conditions of Definition \ref{defn:normalized system} and the conclusions of Theorem \ref{theo:main1}.
Note that for these systems $a>0$. In order to simplify notation we will also
(without loss of generality, because of Theorem \ref{theo:main1})
re-scale $g_x$ and $g_v$ so that the values of $\rho_{x,0}$ and $\rho_{v,0}$ are 1 from now on.

{\noindent \bf Remark:} From the definitions it is clear that $\nu_{N-m,\pm}$ can be identified
with $\nu_{-m,\pm}$ and that $\nu_{-m,\pm}$ is the complex conjugate of $\nu_{m\pm}$.
It will be convenient in this section to relabel these eigenvalues so that $m$ runs
from $\lceil-(N-1)/2 \rceil$ to $\lceil (N-1)/2 \rceil$. For simplicity of notation,
we will however write $\sum_{m=\lceil-(N-1)/2 \rceil}^{\lceil (N-1)/2 \rceil}$ as
$\sum_{m=-N/2}^{N/2}$.

\begin{proposition}
Let $S_N^*$ as in Definition \ref{defn:normalized system} and Theorem \ref{theo:main1}.
Then $a>0$ and the eigenvalues $\nu_{m\varepsilon}$ of $M_N$ can be expanded as
(with $\varepsilon=\pm 1$ and $\theta\equiv \frac{2\pi}{N}$):
\begin{eqnarray*}
\nu_{m\varepsilon} = & im\theta & \left(\dfrac{I_{v,1}}{2}+\varepsilon a^{1/2}\right)+\\
& m^2\theta^2 & \left( -\dfrac{I_{v,2}}{4} -\varepsilon \dfrac{\left(\frac{I_{v,1}I_{v,2}}{4} + \frac{I_{x,3}}{6}\right)}{2a^{1/2}}\right)+\\
& im^3\theta^3 &\left( -\dfrac{I_{v,3}}{12} -
\varepsilon\dfrac{\left(\frac{I_{v,1}I_{v,3}}{12} + \frac{I_{x,4}}{24} + \frac{I_{v,2}^2}{16} \right)}{2a^{1/2}} +\varepsilon \dfrac{\left(\frac{I_{v,1}I_{v,2}}{4} + \frac{I_{x,3}}{6} \right)^2}{8a^{3/2}}\right)+\\
& m^4\theta^4 & \left( \dfrac{I_{v,4}}{48} + \varepsilon\dfrac{\left(\frac{I_{v,2}I_{v,3}}{24} + \frac{I_{v,2}I_{v,3}}{48} + \frac{I_{x,5}}{125} \right)}{2a^{1/2}} -\varepsilon\dfrac{\left(\frac{I_{v,1}I_{v,2}}{4} + \frac{I_{x,3}}{6} \right)
 \left(\frac{I_{v,1}I_{v,3}}{12}+\frac{I_{x,4}}{24} + \frac{I_{v,2}^2}{16}\right)}{4a^{3/2}} \right. \\
 && \left. +\varepsilon \dfrac{\left(\frac{I_{v,1}I_{v,2}}{4} + \frac{I_{x,3}}{6} \right)^3}{16a^{5/2}} \right)+\cdots
\end{eqnarray*}
\label{prop:expansion eigenvals}
\end{proposition}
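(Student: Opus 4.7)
The starting point is the closed-form expression from Proposition \ref{prop:2}, namely
\[
\nu_{m\varepsilon} \;=\; \tfrac{1}{2}\lambda_{v,m} \;+\; \varepsilon\,\sqrt{\tfrac14\lambda_{v,m}^2+\lambda_{x,m}}\,,\qquad \varepsilon\in\{+1,-1\}.
\]
The plan is to Taylor-expand both $\lambda_{v,m}$ and $\lambda_{x,m}$ in the small parameter $y\equiv m\theta$ using the moment expansion (\ref{eq:lambda_expansion_in_moments}) (and its analogue for $\lambda_{v,m}$), then expand the square root by the binomial series, and finally collect terms in $y$. Note crucially that under the hypotheses of Theorem \ref{theo:main1} we have $\rho_{x,-1}=\rho_{x,1}$, so $I_{x,1}=g_x(\rho_{x,1}-\rho_{x,-1})=0$; consequently $\lambda_{x,m}$ starts at order $y^2$ while $\lambda_{v,m}$ starts at order $y$.

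Using $I_{x,1}=0$ and the expansions, the leading-order part of the discriminant is
\[
\tfrac14\lambda_{v,m}^2+\lambda_{x,m} \;=\; -\,y^2\!\left(\tfrac{I_{v,1}^2}{4}+\tfrac{I_{x,2}}{2}\right) + O(y^3) \;=\; -a\,y^2\,(1+R),
\]
where $R=R(y)=c_1y+c_2y^2+c_3y^3+\cdots$ is computed from the $y^3,y^4,y^5$ coefficients of $\tfrac14\lambda_{v,m}^2+\lambda_{x,m}$. To see $a>0$, note $I_{v,1}^2/4\ge 0$, and the rescaling $\rho_{x,0}=1$ together with the decentralized condition forces $\rho_{x,\pm1}=-\tfrac12$, hence $I_{x,2}=-g_x\rho_{x,0}$; stability gives $g_x\rho_{x,0}<0$, so $I_{x,2}/2>0$. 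Factoring out then yields
\[
\sqrt{\tfrac14\lambda_{v,m}^2+\lambda_{x,m}} \;=\; iy\sqrt{a}\,\bigl(1+\tfrac{R}{2}-\tfrac{R^2}{8}+\tfrac{R^3}{16}+\cdots\bigr),
\]
where the branch is fixed by continuity at $y=0$ (the $\varepsilon=\pm 1$ label absorbs the overall sign).

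The remainder of the argument is pure bookkeeping. First compute $c_1,c_2,c_3$ by reading off the $y^3,y^4,y^5$ coefficients of $\tfrac14\lambda_{v,m}^2+\lambda_{x,m}$: for example $c_1=\tfrac{i}{a}\!\left(\tfrac{I_{v,1}I_{v,2}}{4}+\tfrac{I_{x,3}}{6}\right)$, and analogous expressions for $c_2,c_3$ involving $I_{v,1}I_{v,3}$, $I_{v,2}^2$, $I_{x,4}$, and at the next order $I_{v,1}I_{v,4}$, $I_{v,2}I_{v,3}$, $I_{x,5}$. Next expand $\sqrt{1+R}=1+\tfrac{R}{2}-\tfrac{R^2}{8}+\tfrac{R^3}{16}+\cdots$ up to order $y^3$, multiply by $iy\sqrt{a}$, and add $\tfrac12\lambda_{v,m}$. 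Grouping by powers of $y$ gives the four lines of the stated expansion: the $y^1$ coefficient yields $\tfrac{I_{v,1}}{2}+\varepsilon\sqrt{a}$, the $y^2$ term combines $-I_{v,2}/4$ from $\tfrac12\lambda_{v,m}$ with $\varepsilon\sqrt{a}\cdot\tfrac{c_1}{2}$, the $y^3$ term combines $-iI_{v,3}/12$ with $\varepsilon\sqrt{a}\!\left(\tfrac{c_2}{2}-\tfrac{c_1^2}{8}\right)$, and the $y^4$ term combines $I_{v,4}/48$ with $\varepsilon\sqrt{a}\!\left(\tfrac{c_3}{2}-\tfrac{c_1c_2}{4}+\tfrac{c_1^3}{16}\right)$.

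The only real obstacle is arithmetic: keeping track of factors of $i$, the alternating sign pattern in the expansions of $\lambda_{x,m},\lambda_{v,m}$, and the cross-terms from squaring $\lambda_{v,m}$ and from the binomial series. The simplification using $a>0$ to extract $iy\sqrt{a}$ cleanly is what makes the expressions in the proposition a convergent series in integer powers of $y$ rather than in $\sqrt{y}$, and no further analytic input beyond Proposition \ref{prop:2} and Theorem \ref{theo:main1} is needed.
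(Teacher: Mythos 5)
Your proposal is correct and follows essentially the same route as the paper: expand $\nu_{m\pm}=\tfrac{\lambda_{v,m}}{2}\pm\sqrt{\tfrac{\lambda_{v,m}^2}{4}+\lambda_{x,m}}$ from Proposition \ref{prop:2} in powers of $m\theta$ via the moment expansions, use $I_{x,1}=0$ and $a>0$ to factor the discriminant as $-a(m\theta)^2(1+R)$ so the square root becomes $i m\theta\sqrt{a}$ times a binomial series (the paper writes this as the expansion of $\sqrt{z-a}$), and collect powers. Your verification that $a>0$ and the identification of which coefficients combine at each order match the paper's (deliberately terse) argument, so the remaining work is exactly the bookkeeping the paper also leaves as a "substantial but straightforward calculation."
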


\begin{proof}
Expand $\nu_{m\pm}$ given in Proposition \ref{prop:2} in powers of $\theta$ using
\begin{equation*}
a\neq 0 \Rightarrow \sqrt{z-a} = \pm i\sqrt{a}\; \left( 1 - \dfrac{z}{2a}-\dfrac{z^2}{8a^2}- \dfrac{z^3}{16a^3}\cdots\right)
\end{equation*}
After a substantial but straightforward calculation the result is
obtained.      
\end{proof}

The phase velocity of the time-varying sinusoid $f(x,t)=e^{i(\omega t-
  {\mathrm k} x)}$ on the real line is defined by the evolution of points
of constant phase : $\omega t - {\mathrm k} x(t) = c$, which gives the
phase velocity $\omega /{\mathrm k}$. 


Disturbances in the positions of agents in the flock may be decomposed
in terms of solutions to Equation \ref{eqn:normalized system} which
are damped sinusoidal waves as functions of time and agent number. We
define phase velocity in units of number of agents per unit time, as
follows.

\begin{definition} \label{def:phase_velocity}
The set of solutions 
\begin{equation}
z_k(t)= A e^{i(\omega t - b k)} e^{-a  t}
\end{equation}
has phase velocity $\omega / b$.
\end{definition}
On our way to studying the propagation velocity of disturbances in the
system $S_N^*$, we will characterize its phase velocities. We first
establish the following :

\begin{lemma}
\label{lemma:nu_m_opposite_signs}
For $S_N^*$ as in Theorem \ref{theo:main1}, the imaginary parts of the eigenvalues $\nu_{m\pm}$
have opposite signs for $m\neq 0$.
\end{lemma}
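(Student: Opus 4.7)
The plan is to exploit Vieta's formulas on the characteristic quadratic from Proposition \ref{prop:2}, combined with the strong structural information that Theorem \ref{theo:main1} provides about $\lambda_{x,m}$. The slick observation is that the symmetry hypothesis on $\rho_{x,\pm 1}$ makes $\lambda_{x,m}$ real, and the sign hypothesis makes it strictly negative, so the product $\nu_{m+}\nu_{m-}$ is forced to be a positive real --- a very restrictive condition on two complex numbers.

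First I would make $\lambda_{x,m}$ explicit. Under the hypotheses of Theorem \ref{theo:main1} (with normalization $\rho_{x,0}=1$), the symmetry $\rho_{x,-1}=\rho_{x,1}$ together with the decentralized constraint $\rho_{x,-1}+\rho_{x,0}+\rho_{x,1}=0$ forces $\rho_{x,\pm 1}=-\tfrac12$. A direct computation then yields $\lambda_{x,m}=g_x(1-\cos(m\theta))$, which is real, and, since $g_x<0$, strictly negative for $m\neq 0$.

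Next, by Proposition \ref{prop:2} the eigenvalues $\nu_{m+}$ and $\nu_{m-}$ are the two roots of $\nu^2-\lambda_{v,m}\nu-\lambda_{x,m}=0$, so Vieta's formulas give
$$\nu_{m+}\,\nu_{m-}\;=\;-\lambda_{x,m}\;>\;0,$$
a strictly positive real number. In particular neither root vanishes, so I may write $\nu_{m\pm}=r_\pm e^{i\phi_\pm}$ with $r_\pm>0$. The product condition $r_+r_-e^{i(\phi_++\phi_-)}>0$ then forces $\phi_++\phi_-\equiv 0 \pmod{2\pi}$, i.e.\ $\phi_-=-\phi_+$ (after restricting to a fundamental domain). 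Hence
$$\Im(\nu_{m+})=r_+\sin\phi_+,\qquad \Im(\nu_{m-})=-r_-\sin\phi_+,$$
which have opposite signs.

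The only subtle point is the degenerate case $\sin\phi_+=0$, in which both eigenvalues are real and both imaginary parts vanish; this occurs precisely when the discriminant $\lambda_{v,m}^2/4+\lambda_{x,m}$ happens to be a non-negative real (possible, e.g., in the symmetric velocity case $\rho_{v,\pm 1}=-\tfrac12$ with suitable parameters). The Vieta argument handles this uniformly --- the conclusion holds in the weak sense, since both imaginary parts being zero is compatible with ``opposite signs.'' I do not anticipate any real obstacle; the whole proof should fit in a few lines.
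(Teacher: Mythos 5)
Your proof is correct, and it rests on the same basic device as the paper's proof --- Vieta's formula for the constant term of $\nu^2-\lambda_{v,m}\nu-\lambda_{x,m}$, exploited via the fact that the symmetry $\rho_{x,-1}=\rho_{x,1}$ makes $\lambda_{x,m}$ real --- but you finish it differently, and the difference is worth noting. The paper only uses $\Im(\lambda_{x,m})=0$: writing $\nu_{m\pm}=\alpha_{1,2}+i\beta_{1,2}$, the vanishing imaginary part of the product gives $\alpha_1\beta_2+\alpha_2\beta_1=0$, and then the authors invoke asymptotic stability ($\alpha_1<0$, $\alpha_2<0$) to conclude $\beta_1=-(\alpha_1/\alpha_2)\beta_2$ has sign opposite to $\beta_2$. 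You instead use the full strength of the hypotheses of Theorem \ref{theo:main1} to compute $\lambda_{x,m}=g_x\rho_{x,0}(1-\cos(m\theta))<0$ for $m\neq 0$, so that $\nu_{m+}\nu_{m-}=-\lambda_{x,m}$ is a \emph{positive} real; since $\nu_{m-}=(-\lambda_{x,m})\overline{\nu_{m+}}/|\nu_{m+}|^2$, the imaginary parts are opposite with no reference to the real parts of the eigenvalues at all. This makes your argument slightly more self-contained (it does not rely on the stability conclusion, i.e.\ on the Routh--Hurwitz analysis, only on the sign condition $g_x\rho_{x,0}<0$ directly), at the cost of needing the explicit form of $\lambda_{x,m}$. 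Both arguments share the same mild slack in the degenerate case where the discriminant is nonnegative (e.g.\ possibly at $m\theta=\pi$), in which both eigenvalues are real and both imaginary parts vanish; the paper's proof silently yields $\beta_1=\beta_2=0$ there, and you flag this explicitly, which is the more careful reading of ``opposite signs.''
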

\begin{proof}
Set $\nu_{m+} = \alpha_1+i\beta_1$ and $\nu_{m-} =
\alpha_2+i\beta_2$. As $\nu_{m\pm}$ are roots of
$\nu^2-\nu\lambda_{v,m} - \lambda_{x,m}$, and 
\begin{equation*}
(\nu-\mu_1)(\nu-\mu_2)=
\nu^2-(\alpha_1+\alpha_2+i(\beta_1+\beta_2))\nu+\alpha_1\alpha_2-\beta_1\beta_2+
i(\alpha_1\beta_2+\alpha_2\beta_1) ,
\end{equation*}
we can identify $\lambda_{x,m} = -(\alpha_1\alpha_2-\beta_1\beta_2) + i
(\alpha_1\beta_2+ \alpha_2\beta_1)$. We have $\Im(\lambda_{x,m})=0$ as
$\rho_{x,j}$ is symmetric, so
$\alpha_1\beta_2+\alpha_2\beta_1=0$. Solving gives
$\beta_1=-\tfrac{\alpha_1}{\alpha_2}\beta_2$. But $\alpha_1 <0$ and
$\alpha_2<0$ because $S_N^*$ is asymptotically stable, 
so $\beta_1$ and $\beta_2$ have opposite signs.
\end{proof}

\begin{lemma}
For $S_N^*$ as in Theorem \ref{theo:main1}, phase velocities are given by
\begin{equation} \label{eq:phase_velocity_c}
c_{m+}=\frac{-\Im(\nu_{m-})}{m\theta} > 0 \quad\logand \quad
c_{m-}=\frac{-\Im(\nu_{m+})}{m\theta} < 0
\end{equation}
for $1\leq m\leq \frac{N}{2}$. 
\label{lem:phasevelocity}
\end{lemma}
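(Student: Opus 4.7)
The plan is to read off the phase velocity of each eigenmode directly from its time dependence, then use Lemma \ref{lemma:nu_m_opposite_signs} together with the leading-order expansion of Proposition \ref{prop:expansion eigenvals} to determine which of the two branches $\nu_{m\pm}$ carries the positive phase velocity.

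\emph{Step 1 (eigenmodes as damped plane waves).} By Proposition \ref{prop:2} the eigenvector of $M_N$ with eigenvalue $\nu_{m\varepsilon}$ is $(w_m,\nu_{m\varepsilon}w_m)^T$, so the associated particular solution of \eqref{eqn:first-order} has
\[
z_k(t) = \frac{A}{\sqrt{N}}\,e^{\nu_{m\varepsilon}t}\,e^{ikm\theta}.
\]
Writing $\nu_{m\varepsilon}=\alpha+i\beta$, with $\alpha<0$ by Theorem \ref{theo:main1}, we get $z_k(t)=(A/\sqrt{N})e^{\alpha t}e^{i(\beta t-(-m\theta)k)}$. Matching against the template $A\,e^{i(\omega t-bk)}e^{-at}$ of Definition \ref{def:phase_velocity} identifies $\omega=\beta$, $b=-m\theta$, $a=-\alpha>0$, so the phase velocity is $\omega/b=-\Im(\nu_{m\varepsilon})/(m\theta)$.

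\emph{Step 2 (signs at leading order).} By Lemma \ref{lemma:nu_m_opposite_signs}, $\Im(\nu_{m+})$ and $\Im(\nu_{m-})$ have opposite signs, so one of the two phase velocities is positive and the other negative; it remains to pin down which. From Proposition \ref{prop:expansion eigenvals} the leading term is
\[
\nu_{m\varepsilon}=im\theta\left(\frac{I_{v,1}}{2}+\varepsilon\, a^{1/2}\right)+O(m^2\theta^2).
\]
The normalization $\rho_{x,0}=1$ together with the stability requirement $g_x<0$ yields $I_{x,2}=-g_x>0$, and hence $a=I_{v,1}^2/4+I_{x,2}/2>I_{v,1}^2/4$, so $a^{1/2}>|I_{v,1}|/2$. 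Consequently $\Im(\nu_{m+})>0>\Im(\nu_{m-})$ for all sufficiently small $m>0$, and combining with Step 1 gives the claimed formulas $c_{m+}=-\Im(\nu_{m-})/(m\theta)>0$ and $c_{m-}=-\Im(\nu_{m+})/(m\theta)<0$ in that regime.

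\emph{Step 3 (extending to all $1\le m\le N/2$).} The main obstacle is propagating this sign assignment beyond the small-$m$ regime, since a priori the branch labels could switch as $\phi=m\theta$ ranges over $(0,\pi]$. Here I would treat $\nu_{\pm}(\phi)$ as continuous functions of $\phi$ (by Definition \ref{defn:eigencurve}) and exploit the relation extracted in the proof of Lemma \ref{lemma:nu_m_opposite_signs},
\[
\Im(\nu_{m+})=-\frac{\Re(\nu_{m+})}{\Re(\nu_{m-})}\,\Im(\nu_{m-}),
\]
in which the coefficient is strictly positive (both real parts are negative by asymptotic stability). Therefore $\Im(\nu_{m+})$ can change sign only if $\Im(\nu_{m-})$ vanishes simultaneously, and at such a degenerate $\phi$ the two phase velocities would both be zero rather than swap, so the labeling fixed in Step 2 persists throughout the range and delivers \eqref{eq:phase_velocity_c}.
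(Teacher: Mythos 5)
Your Step 1 is exactly the paper's computation, and Step 2 is a correct (and in fact useful) verification that for small $m\theta$ the algebraic branches of Proposition \ref{prop:2} satisfy $\Im(\nu_{m+})>0>\Im(\nu_{m-})$. The gap is in Step 3. First, a sign slip: since $\Re(\nu_{m\pm})<0$ by asymptotic stability, the coefficient $-\Re(\nu_{m+})/\Re(\nu_{m-})$ in your relation is strictly \emph{negative}, not positive (as it must be, since Lemma \ref{lemma:nu_m_opposite_signs} asserts the imaginary parts have opposite signs). More seriously, the continuation argument does not close: knowing that $\Im(\nu_{m+})$ can vanish only together with $\Im(\nu_{m-})$ does not prevent the two signs from exchanging after such a degeneracy, nor from both remaining zero; continuity imposes no obstruction at a common zero. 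And the degeneracy is not hypothetical: for parameters admissible under Theorem \ref{theo:main1} (e.g.\ the symmetric choice $\rho_{v,1}=\rho_{v,-1}=-\tfrac12$ with $g_v^2>-2g_x$), the discriminant $\lambda_v(\phi)^2/4+\lambda_x(\phi)$ is positive for $\phi$ near $\pi$, so both roots are real and both imaginary parts vanish for a whole range of $m$ near $N/2$; there the strict inequalities you are trying to propagate fail under any fixed labeling, so no continuation argument can deliver them for all $1\le m\le N/2$. (A smaller issue: the $\pm$ branches of Proposition \ref{prop:2} are defined through a square root and are not automatically continuous in $\phi$; the definition of $\gamma$ gives a set, not a labeled continuous branch, so even the continuity you invoke needs an argument.)

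The paper avoids all of this because it never tries to track the algebraic branches globally: its proof invokes Lemma \ref{lemma:nu_m_opposite_signs} and then, for each $m$, \emph{relabels} the subscripts ``$+$'' and ``$-$'' so that $\Im(\nu_{m+})>0$ and $\Im(\nu_{m-})<0$; with that convention the claimed inequalities follow immediately from your Step 1 computation against Definition \ref{def:phase_velocity}. In other words, the content of the lemma is that each pair of modes contributes one right-moving and one left-moving wave, not that a fixed branch of the square root keeps a fixed sign for all $m$. If you adopt that reading, your Steps 1 and 2 together with Lemma \ref{lemma:nu_m_opposite_signs} already suffice --- Step 2 then serves as the (worthwhile, and left implicit in the paper) consistency check that for small $m\theta$ the relabelled subscripts agree with the expansion branches used later in Lemma \ref{lem:phasevelocity2} --- and Step 3 can simply be dropped.
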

\begin{proof}
Lemma \ref{lemma:nu_m_opposite_signs} implies $\Im(\nu_{m+})$ and
$\Im(\nu_{m-})$ have opposite signs. Redefine (if necessary, see
Proposition \ref{prop:2}) the subscripts ``$+$" and ``$-$" 
so that $\nu_{m+}$ has positive imaginary part, and $\nu_{m-}$ has
negative imaginary part.

We now derive phase velocities where $+$ denotes going from agent ``0"
towards agent ``N".  The expression for the $k\th$ entry of the
time-evolution of the solution corresponding to the eigenvalue
$\nu_{m\pm}$ is (see Proposition \ref{prop:evals Lapl}) is (up to an
arbitrary multiplicative constant) :
\begin{equation*}
z_k= e^{(\nu_{m\pm}) t}e^{ i k m  \theta}
=e^{\Re(\nu_{m\pm})\,t} e^{i(\Im(\nu_{m\pm}) t + k m \theta)}
=e^{\Re(\nu_{m\pm})\,t} e^{i(\Im(\nu_{m\pm}) t - (-m \theta)k)}
\end{equation*}
Comparing this to Definition \ref{def:phase_velocity} shows these two
solutions have phase velocities $c_{m,+}$ and $c_{m,-}$ as given in
\ref{eq:phase_velocity_c}. 
\end{proof}

From Proposition \ref{prop:expansion eigenvals} we see that the
eigenvalues close to the origin form four branches which intersect at
the origin. Namely $\varepsilon$ can be +1 or -1, and the counter $m$
can be positive or negative. This is illustrated in Figure
\ref{fig:phasevelocities2}. So for given $|m|$ we get two phase
velocities: one in each direction.

\begin{lemma}
  For $S_N^*$ as in Theorem \ref{theo:main1}, the phase velocities
  $c_{m\varepsilon}$ of Lemma \ref{lem:phasevelocity} can be expanded
  as ($\varepsilon\in \{-1,1\}$):
\begin{eqnarray*}
c_{m\varepsilon} &=& -\dfrac{g_v(1+2\rho_{v,1})}{2}+\varepsilon
\sqrt{\dfrac{g_v^2(1+2\rho_{v,1})^2}{4}-\dfrac{g_x}{2}} + \\
&& m^2\theta^2\left(\dfrac{g_v(1+2\rho_{v,1})}{12}
  -\varepsilon\;\dfrac{2g_v^2(1+2\rho_{v,1})^2-g_x+\frac32
    g_v^2}{24[g_v^2(1+2\rho_{v,1})^2-2g_x]^{1/2}}
  +\varepsilon\;\dfrac{g_v^2(1+2\rho_{v,1})}{16[g_v^2(1+2\rho_{v,1})^2-2g_x]^{3/2}}
\right)\\
&& + \bigoh((m\theta)^4) \\
\end{eqnarray*}
The real parts of the associated eigenvalues can be expanded as:
\begin{equation*}
\Re(\nu_{m\varepsilon})= m^2 \theta^2\left(\dfrac{g_v}{4}+ \varepsilon\;
  \dfrac{g_v^2(1+2\rho_{v,1})}{4[g_v^2(1+2\rho_{v,1})^2-2g_x]^{1/2}} \right)+\bigoh((m\theta)^4)
\end{equation*}
\label{lem:phasevelocity2}
\end{lemma}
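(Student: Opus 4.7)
The plan is to specialize the general expansion of $\nu_{m\varepsilon}$ from Proposition \ref{prop:expansion eigenvals} to the symmetric, normalized nearest-neighbor setting, separate real and imaginary parts, and then apply Lemma \ref{lem:phasevelocity} to convert the imaginary parts into phase velocities. Since the analytic work of expanding the square root in Proposition \ref{prop:2} is already done in Proposition \ref{prop:expansion eigenvals}, what remains is bookkeeping.

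First I would compute the low-order moments under the standing hypotheses $\mathcal{N}=\{-1,0,1\}$, $\rho_{x,-1}=\rho_{x,1}$, $\rho_{x,0}=\rho_{v,0}=1$ and the decentralized relation $\rho_{v,-1}+1+\rho_{v,1}=0$. Symmetry of $\rho_x$ kills every odd moment, so $I_{x,3}=I_{x,5}=0$, whereas $I_{x,2}=I_{x,4}=-g_x$ (since $\rho_{x,\pm 1}=-1/2$). The decentralized relation gives $\rho_{v,-1}=-1-\rho_{v,1}$, from which $I_{v,1}=I_{v,3}=g_v(1+2\rho_{v,1})$ and $I_{v,2}=I_{v,4}=-g_v$. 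With these, $a=I_{v,1}^2/4+I_{x,2}/2=g_v^2(1+2\rho_{v,1})^2/4-g_x/2>0$ (positivity follows from $g_x<0$, which is guaranteed by Theorem \ref{theo:main1}), and $2a^{1/2}=\sqrt{g_v^2(1+2\rho_{v,1})^2-2g_x}$ — exactly the radical appearing throughout the target formulas. I also record $a^{3/2}=[g_v^2(1+2\rho_{v,1})^2-2g_x]^{3/2}/8$ for later use.

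Substituting these moment values into Proposition \ref{prop:expansion eigenvals} lets the expansion split cleanly into real parts (from $(m\theta)^{2k}$) and imaginary parts (from $i(m\theta)^{2k+1}$). The $(m\theta)^2$ coefficient of $\Re(\nu_{m\varepsilon})$ reduces to $-I_{v,2}/4-\varepsilon I_{v,1}I_{v,2}/(8a^{1/2})=g_v/4+\varepsilon g_v^2(1+2\rho_{v,1})/(8a^{1/2})$, which, after rewriting $8a^{1/2}=4\cdot 2a^{1/2}$, is exactly the stated formula. For the phase velocities I invoke Lemma \ref{lem:phasevelocity}: $c_{m\varepsilon}=-\Im(\nu_{m,-\varepsilon})/(m\theta)$. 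The $\varepsilon\mapsto-\varepsilon$ substitution flips only the $\varepsilon$-dependent summands, and dividing by $-m\theta$ produces the leading constant $-g_v(1+2\rho_{v,1})/2+\varepsilon a^{1/2}$ together with three $m^2\theta^2$ corrections: the $\varepsilon$-independent piece $I_{v,3}/12=g_v(1+2\rho_{v,1})/12$, the middle correction built from $(I_{v,1}I_{v,3}/12+I_{x,4}/24+I_{v,2}^2/16)/(2a^{1/2})$, and the cubic-radical correction built from $(I_{v,1}I_{v,2}/4+I_{x,3}/6)^2/(8a^{3/2})$ (where $I_{x,3}$ drops out).

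The main obstacle is purely algebraic: combining the middle correction $g_v^2(1+2\rho_{v,1})^2/12-g_x/24+g_v^2/16$ over a common denominator of $48$ to produce the compact form $[2g_v^2(1+2\rho_{v,1})^2-g_x+\tfrac{3}{2}g_v^2]/24$, and converting the cubic-radical correction using $8a^{3/2}=[g_v^2(1+2\rho_{v,1})^2-2g_x]^{3/2}$. There is no conceptual novelty beyond these substitutions; all the genuine analysis has already been done in Proposition \ref{prop:expansion eigenvals} and Lemma \ref{lem:phasevelocity}.
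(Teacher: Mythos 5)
Your proposal is correct and follows essentially the same route as the paper's proof: compute the moments $I_{x,j}$ and $I_{v,j}$ under the normalization $\rho_{x,0}=\rho_{v,0}=1$, $\rho_{x,\pm1}=-\tfrac12$, $\rho_{v,-1}=-(1+\rho_{v,1})$, substitute into Proposition \ref{prop:expansion eigenvals}, and convert via $c_{m\varepsilon}=-\Im(\nu_{m,-\varepsilon})/(m\theta)$ from Lemma \ref{lem:phasevelocity}. One small remark: carrying your own substitution through for the last correction term gives $\bigl(\tfrac{I_{v,1}I_{v,2}}{4}\bigr)^2/(8a^{3/2})=g_v^4(1+2\rho_{v,1})^2/\bigl(16[g_v^2(1+2\rho_{v,1})^2-2g_x]^{3/2}\bigr)$, which indicates a typo in the printed numerator of the lemma ($g_v^2(1+2\rho_{v,1})$ there) rather than any gap in your argument.
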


\noindent

\begin{proof}
With the reduction $\rho_{x,0}=\rho_{v,0}=1$ as described in the remark in at the 
beginning of section \ref{chap:signal}, Theorem \ref{theo:main1}
implies $\rho_{x,1}=\rho_{x,-1}=-\tfrac{1}{2}$, and Equation
(\ref{eq:decentralized}) implies $\rho_{v,-1} = -(1+\rho_{v,1})$. We can then
compute all of the moments 
\begin{align*}
I_{x,j} &= (-1)^j(-\tfrac{1}{2}) + 1^j(-\tfrac{1}{2}) &= \ & \begin{cases}
0 \mbox{\quad $j$ even}\\
1 \mbox{\quad $j$ odd}
\end{cases} \\
I_{v,j} &= (-1)^j(-(1+\rho_{v,1}))+1^j \rho_{v,1} &= \ &\begin{cases}
-1\quad \quad \ \;\mbox{\quad  $j$ even} \\
1+2\rho_{v,j} \mbox{\quad  $j$ odd}
\end{cases}
\end{align*}

Substituting the expansion from Proposition \ref{prop:expansion
  eigenvals} into the expressions for the phase velocity
${c_{m\varepsilon}=-\tfrac{\Im(\nu_{m-\epsilon})}{m\theta}}$ from
Lemma \ref{lem:phasevelocity}, and using the above expressions for the
moments $I_{x,j}$ and $I_{v,j}$ gives the desired expansion.
\end{proof}

For any set of initial conditions $z_k(0)$ and $\dot z_k(0)$, there
are unique constants $a_m$ and $b_m$ so that the solution of the
system $S_N^*$ has the form

\begin{equation} \label{eq:zkt_solution_expansion}
z_k(t)= \sum_{m=-N/2}^{N/2}\,a_m e^{im\theta k}\;e^{\nu_{m+}t}+
\sum_{m=-N/2}^{N/2}\,b_m e^{im\theta k}\;e^{\nu_{m-}t}
\end{equation}

Our main result of this paper is to show that the first sum represents
a signal travelling to the left (decreasing agent number), that may be
approximated by a travelling wave with a single signal
velocity. Likewise, the second sum represents a signal travelling to
the right. We first need a small technical lemma.

\begin{lemma}\label{lemma:exp_ab}
There is a $\delta>0$ such that for all $a$,$b$ satisfying $|a|<\delta$ and $|b|<\delta$, it follows that 
$|e^{a}-e^{b}|<2|a-b|$.
\end{lemma}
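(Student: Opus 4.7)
The plan is to prove this via a standard integral/mean-value estimate on the complex exponential, noting that $\exp$ is Lipschitz with constant close to $1$ on a small neighborhood of the origin. Concretely, for any complex $a,b$, I will write
\begin{equation*}
e^{a}-e^{b} \;=\; \int_0^1 \frac{d}{dt}\, e^{b+t(a-b)}\, dt \;=\; (a-b)\int_0^1 e^{b+t(a-b)}\, dt,
\end{equation*}
which gives the bound $|e^{a}-e^{b}|\le |a-b|\cdot \max_{t\in[0,1]}|e^{b+t(a-b)}|$.

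Next, if $|a|<\delta$ and $|b|<\delta$, then for every $t\in[0,1]$ the point $b+t(a-b)$ lies on the segment from $b$ to $a$, so $|b+t(a-b)|\le (1-t)|b|+t|a|<\delta$. Since $|e^z|=e^{\Re z}\le e^{|z|}$, this yields $|e^{b+t(a-b)}|\le e^{\delta}$, and therefore
\begin{equation*}
|e^{a}-e^{b}| \;\le\; e^{\delta}\,|a-b|.
\end{equation*}

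Finally I choose $\delta$ so that $e^{\delta}<2$, e.g.\ any $\delta<\ln 2$. Then the desired strict inequality $|e^{a}-e^{b}|<2|a-b|$ holds whenever $a\ne b$; the case $a=b$ is trivial since both sides vanish. There is no real obstacle here—the only point to watch is that $a,b$ are allowed to be complex (which is how the lemma will be used later, applied to eigenvalue differences), so one must avoid invoking the real mean value theorem directly and instead use the integral representation above, which works identically over $\mathbb{C}$.
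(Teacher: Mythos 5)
Your proof is correct, but it takes a different route from the paper. You use the integral representation $e^{a}-e^{b}=(a-b)\int_0^1 e^{b+t(a-b)}\,dt$, note that the segment from $b$ to $a$ stays in the disc of radius $\delta$, and conclude $|e^{a}-e^{b}|\le e^{\delta}|a-b|$, then pick any $\delta<\ln 2$. The paper instead works directly with the power series: it writes $e^a-e^b=(a-b)+\sum_{n\ge 2}\tfrac{1}{n!}(a^n-b^n)$, factors $a^n-b^n=(a-b)\sum_{j=0}^{n-1}a^j b^{n-1-j}$, and argues that the resulting multiplier of $|a-b|$ is a continuous function of $(a,b)$ tending to $1$ at the origin, so it is below $2$ for some unspecified small $\delta$. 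Both arguments are valid over $\C$, which is the relevant setting for the later application to $a=\Re(\nu_{m+})t$ and $b=im\theta(c_{m-}-c_-)t$. Your approach buys an explicit admissible $\delta$ (anything below $\ln 2$) and a clean Lipschitz-constant statement; the paper's is slightly more elementary in that it avoids integrating a complex-valued function along a path, at the cost of a purely qualitative, compactness/continuity-style choice of $\delta$. One cosmetic point shared by both proofs: at $a=b$ the stated strict inequality degenerates to $0<0$, so strictly speaking the conclusion should be read for $a\neq b$ (or with $\le$), exactly as you note.
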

\begin{proof}
$e^a-e^b = (a-b)+\sum_{n=2}^{\infty} \tfrac{1}{n!}(a^n-b^n)$. Using 
$a^n-b^n=(a-b)\sum_{j=0}^{n-1} a^j b^{n-1-j}$ we have
$|e^a - e^b|=|a-b|\left(1+\sum_{n=2}^{\infty} \tfrac{1}{n!}
\left(\sum_{j=0}^{n-1} a^j b^{n-1-j}\right)\right)$. The term
multiplying $|a-b|$ expression is a convergent power series, so is
continuous, and approaches 1 as $a\to 0$ and $b\to 0$. The desired
inequality then follows.
\end{proof}

We now address the first term in Equation \ref{eq:zkt_solution_expansion}.

\begin{proposition} \label{prop:timedomain1}
Let $S_N^*$ be as in Theorem \ref{theo:main1}, and $c_-\equiv
c_{0-}$ as given in Lemma \ref{lem:phasevelocity2} ($m=0$). 
Suppose the initial conditions are such that
$b_m=0$ for all $m$, in the expansion in Equation (\ref{eq:zkt_solution_expansion}). 
In addition, suppose that the coefficients $a_m$ satisfy $|a_m| < \tfrac{M}{m^p}$ for some $p>1$.
Fix $K>1$ and $0<\alpha<\beta<1$. Then, for all $t\in[\tfrac{N}{|c_-|}, K\tfrac{N}{|c_-|}]$, there is a function $f_-$ so that
\begin{align}\label{eq:prop_diff_bound}
|z_k(t) - f_-(k-c_-t)| < \frac{MDK}{|c_-|}N^{3\alpha - 1} 
&+ \tfrac{2M}{p-1}\left( (N^\alpha-1)^{1-p} - (N^\beta-1)^{1-p} \right) e^{-\mathcal{C}(\alpha,\beta) t}  \notag \\
&+ \tfrac{2M}{p-1}\left( (N^\beta-1)^{1-p} \right) e^{-\mathcal{C}(\beta,1) t} 
\end{align}
for sufficiently large $N$, where
$D$ is a constant, and
\begin{equation*}
\mathcal{C}_{}(a,b) = \min_{N^a \leq |m| \leq \min(N/2,N^b)} |\Re(\nu_{m+})|.
\end{equation*}
%
%
%
%
%
%
%
%
%
\end{proposition}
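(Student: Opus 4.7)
The plan is to approximate $z_k(t)$ by the leading-order low-frequency travelling wave at velocity $c_-$ and control the error by splitting the Fourier sum into three frequency bands. I would define
\begin{equation*}
f_-(y) \equiv \sum_{|m|<N^\alpha} a_m\, e^{im\theta y},
\end{equation*}
so that $f_-(k-c_-t)=\sum_{|m|<N^\alpha} a_m\, e^{im\theta k}\, e^{-im\theta c_- t}$. Substituting \eqref{eq:zkt_solution_expansion} (with $b_m=0$) decomposes $z_k(t)-f_-(k-c_-t)=S_1+S_2+S_3$, where $S_1$ collects the low-frequency phase-corrected differences $e^{\nu_{m+}t}-e^{-im\theta c_- t}$ over $|m|<N^\alpha$, while $S_2$ and $S_3$ collect the raw modes $a_m e^{im\theta k} e^{\nu_{m+}t}$ over $N^\alpha\leq |m|\leq N^\beta$ and $N^\beta<|m|\leq N/2$ respectively.

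For $S_1$ I would factor $e^{\nu_{m+}t}-e^{-im\theta c_- t}=e^{-im\theta c_- t}\bigl(e^{(\nu_{m+}+im\theta c_-)t}-1\bigr)$, use $|e^{-im\theta c_- t}|=1$, and apply Lemma \ref{lemma:exp_ab} in the form $|e^z-1|\leq 2|z|$. From Lemma \ref{lem:phasevelocity2} one reads off $\Re(\nu_{m+})=\mathcal{O}((m\theta)^2)$ and, via $\Im(\nu_{m+})=-m\theta c_{m-}$ from Lemma \ref{lem:phasevelocity} together with the expansion $c_{m-}=c_-+\mathcal{O}((m\theta)^2)$, that $\Im(\nu_{m+})+m\theta c_-=-m\theta(c_{m-}-c_-)=\mathcal{O}((m\theta)^3)$. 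Hence $|\nu_{m+}+im\theta c_-|\leq D(m\theta)^2$ for a constant $D$, and for $|m|<N^\alpha$ and $t\leq KN/|c_-|$, $|(\nu_{m+}+im\theta c_-)t|$ is of order $N^{2\alpha-1}$, smaller than the $\delta$ of Lemma \ref{lemma:exp_ab} for sufficiently large $N$. Using the crude bound $|a_m|\leq M$, summing $\sum_{|m|<N^\alpha}2D(m\theta)^2 t$ via $\sum_{|m|<N^\alpha} m^2=\mathcal{O}(N^{3\alpha})$ and $\theta^2 t\leq 4\pi^2 K/(N|c_-|)$ gives $|S_1|\leq \tfrac{MDK}{|c_-|}N^{3\alpha-1}$ after absorbing constants into $D$. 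For $S_2$ and $S_3$, $|e^{\nu_{m+}t}|=e^{\Re(\nu_{m+})t}$ is uniformly bounded by $e^{-\mathcal{C}(\alpha,\beta)t}$ and $e^{-\mathcal{C}(\beta,1)t}$ respectively by definition of $\mathcal{C}$, and the polynomial decay of $a_m$ together with the integral estimate $\sum_{|m|\geq M_0}|m|^{-p}\leq \tfrac{2}{p-1}(M_0-1)^{1-p}$ yields $|S_2|\leq \tfrac{2M}{p-1}\bigl((N^\alpha-1)^{1-p}-(N^\beta-1)^{1-p}\bigr)e^{-\mathcal{C}(\alpha,\beta)t}$ and $|S_3|\leq \tfrac{2M}{p-1}(N^\beta-1)^{1-p}e^{-\mathcal{C}(\beta,1)t}$. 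Summing the three contributions reproduces the stated inequality.

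The main obstacle is the $S_1$ estimate. Two independent bookkeeping steps must cooperate: the quadratic size of $\nu_{m+}+im\theta c_-$ (the combined content of Lemma \ref{lem:phasevelocity2} on both real and imaginary parts), and the counting over roughly $N^\alpha$ terms \emph{without} using the polynomial decay of $a_m$. The factor $\theta^2 t\sim 1/N$ coming from the time horizon $t\leq KN/|c_-|$ must cancel one power of $N$ against the counting factor $\sum_{|m|<N^\alpha} m^2\sim N^{3\alpha}$ to produce the scaling $N^{3\alpha-1}$, and one must verify that the hypothesis of Lemma \ref{lemma:exp_ab} is satisfied over the whole low band. Once this is established, the tradeoff is clear: taking $\alpha$ small shrinks $S_1$ but inflates $(N^\alpha-1)^{1-p}$ in $S_2$, and the freedom to choose $\alpha<\beta<1$ in the statement reflects this tension between dispersion error and the spectral cutoff needed to exploit the damping $\mathcal{C}(\alpha,\beta)$.
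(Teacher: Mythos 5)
Your proposal is correct and follows essentially the same route as the paper: the same cutoff function $f_-(y)=\sum_{|m|<N^\alpha}a_m e^{im\theta y}$, the same three-band split at $N^\alpha$ and $N^\beta$, the same use of Lemma \ref{lemma:exp_ab} (your factorization $e^{-im\theta c_- t}\bigl(e^{(\nu_{m+}+im\theta c_-)t}-1\bigr)$ is algebraically the paper's $e^{im\theta(k-c_{m-}t)}\bigl(e^{\Re(\nu_{m+})t}-e^{im\theta(c_{m-}-c_-)t}\bigr)$ with the lemma applied to the same difference), and the same damping-plus-tail estimates for the middle and high bands. The only cosmetic deviation is that you sum $m^2$ over the low band instead of using a uniform per-term bound times the term count, which yields the identical $N^{3\alpha-1}$ scaling.
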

\begin{proof}
Consider the signal after time $t$:
\begin{equation}
\label{eq:zkt_split3}
z_k(t)= \left( \sum_{|m|<N^\alpha} +
\sum_{|m|\in[N^\alpha,N^\beta)} + \sum_{|m| \geq N^\beta}\right)\;a_m
e^{im\theta k}\, e^{\nu_{m+} t}
\end{equation}
Using Lemma \ref{lem:phasevelocity} we obtain
\begin{equation*}
e^{\nu_{m+} t }= e^{\Re(\nu_{m+})t} e^{i\Im(\nu_{m+})t} = e^{\Re(\nu_{m+})t}\; e^{-im\theta c_{m-}t}
\end{equation*}
Substituting back to Equation (\ref{eq:zkt_split3}) gives
\begin{equation}
z_k(t)= \left( \sum_{|m|<N^\alpha} +
\sum_{|m|\in[N^\alpha,N^\beta)} + \sum_{|m| \geq N^\beta}\right)
a_m e^{\Re(\nu_{m+})t} e^{i m \theta (k-c_{m-} t)}
\end{equation}

We note that this form shows that the solution is a sum of sinusoids,
with phase velocity $c_{m-}$, that are damped by the exponential
factor $e^{\Re(\nu_{m+})}$. Intuitively, our result follows because
for small $m$ the phase velocities are all close to the constant
$c_{-}$ and the damping is minimal, while for large $m$ the damping is
large enough so that we can ignore that the phase velocities depend on
$m$.

Set $f_-(z) = \sum_{|m|<N^\alpha} a_m e^{im\theta z }$. We then see
\begin{align} \label{eq:abs_zktmfm}
|z_k(t) - f_-(k-c_- t)| \leq & \left| \sum_{|m|<N^\alpha} a_m \left( e^{\Re(\nu_{m+}) t}
  e^{im\theta (k-c_{m-} t)} - e^{im\theta (k-c_-t)} \right)\right| + \notag \\
&\left|\sum_{|m|\in [N^\alpha,N^\beta]}  a_m e^{\Re(\nu_{m+}) t}
  e^{im\theta (k-c_{m-} t)} \right| +  \notag \\
  &\left| \sum_{|m|>N^\beta} a_m e^{\Re(\nu_{m+}) t}
  e^{im\theta (k-c_{m-} t)} \right|
\end{align}
We will bound these three sums separately. For the first sum, we may factor
\begin{equation*} e^{\Re(\nu_{m+}) t} e^{im\theta (k-c_{m-} t)} - e^{im\theta
  (k-c_-t)} = e^{im\theta(k-c_{m-} t)}\left(e^{\Re(\nu_{m+}) t} -
    e^{im\theta(c_{m-}-c_-) t} \right)
\end{equation*}
Applying lemma \ref{lemma:exp_ab} with $a=\Re(\nu_{m+}) t$ and
$b=im\theta(c_{m-}-c_-) t$ gives (provided these can be made
small enough)
\begin{align*}
|e^{im\theta(k-c_{m-} t)}\left(e^{\Re(\nu_{m+}) t} -
    e^{im\theta(c_{m-}-c_-) t} \right)| & \leq
2|\Re(\nu_{m+}) t  - im\theta(c_{m-}-c_-) t| \\
& \leq 2 |t| ( |\Re(\nu_{m+})| + |m\theta| |c_{m-} - c_-| )
\end{align*}
The expansions in Lemma \ref{lem:phasevelocity2} shows that both
$|c_{m-} - c_-|$ and $\Re(\nu_{m+})$ are $\bigoh(\theta^2 m^2)$. In
addition, for $|m|<N^\alpha$, $|m\theta| < 2\pi m N^{\alpha-1}$. This
implies there is a constant $C$ so that $|\Re(\nu_{m+})| + |m\theta|
|c_{m-} - c_-| < C (N^{\alpha-1})^2$ (and also justifies that $a$ and
$b$ as defined above may be made sufficiently small, by taking $N$
sufficiently large). The first sum in Equation \ref{eq:abs_zktmfm} has
$2N^\alpha$ terms, each has $|a_m|<M$; the entire sum is then bounded
by $2 N^\alpha M 2 |t| C (N^{\alpha-1})^2 = 4 M |t| C N^{3\alpha -
  2}$. For $t \in [N/|c_-|, KN/|c_-|]$, the same entire sum is bounded by
$4M K (N/|c_-|) C N^{3\alpha - 2} = \tfrac{M_- D K}{|c_-|} N^{3\alpha - 1}$
for $D=4 C$.

For the second sum, we have $|e^{\Re(\nu_{m+}) t} |<
  e^{-\mathcal{C}(\alpha,\beta) t}$. Using the decay condition on the coefficients $a_m$ 
shows the second sum is bounded by
\[
\left( 2 \sum_{m=N^\alpha}^{N^\beta} M m^{-p} \right) e^{-\mathcal{C}(\alpha,\beta)t}
\]
The elementary bound $\sum_{m=a}^b m^{-p}\leq \int_{a-1}^{b-1} x^{-p} dx = \tfrac{1}{p-1} ( (a-1)^{p-1} - (b-1)^{p-1})$,
applied above, gives the stated second term of Equation \ref{eq:prop_diff_bound}.
The third sum is bounded similarly by $\tfrac{2 M}{p-1} (N^\beta -1)^{1-p}  e^{-\mathcal{C}(\beta,1)t}$, the proposition follows from adding the
    bounds for all three sums.
\end{proof}

We now state the main result of the paper, which shows that in general
$z_k(t)$ is well approximated by two travelling waves, with two
different signal velocities, in opposite directions.

\begin{theorem} \label{theo:timedomain1} Let $S_N^*$ be as in Theorem
  \ref{theo:main1}, $c_\pm\equiv c_{0\pm}$ as given in Lemma
  \ref{lem:phasevelocity2}. Fix $0<\alpha < \beta < 1$. Let
  $|a_m|<Mm^{-p}$ and $|b_m|<Mm^{-p}$ for $p>1$, where $a_m$ and $b_m$ are as
  in Equation \ref{eq:zkt_solution_expansion}. Fix $K>1$. Define
  \begin{equation*}
    \mathcal{C}(a,b) = \min_{\substack{N^a \leq |m| \leq \min(N/2,N^b)\\ \varepsilon\in\{-1,1\}}}
	|\Re(\nu_{m\varepsilon})|
  \end{equation*}
  Then, for sufficiently large $N$, there are functions $f_-$ and
  $f_+$, and constant $D$ so that
  \begin{align} \label{eq:maintheorem}
    |z_k(t) - f_-(k-c_-t) - f_+(k-c_+ t) | & \leq 
    {MDK}(\tfrac{1}{|c_-|}+\tfrac{1}{c_+}) N^{3\alpha-1}  \notag \\
    &+ \tfrac{4M}{p-1}( (N^\beta-1)^{1-p}-(N^\alpha-1)^{1-p})e^{-\mathcal{C}(\alpha,\beta)t}  \notag \\
& +\tfrac{4M}{p-1}( (N^\beta-1)^{1-p} )  e^{-\mathcal{C}(\beta,1) t}
  \end{align}
  for all $t\in[\tfrac{N}{|c_-|},K\tfrac{N}{|c_-|}] \cap [\tfrac{N}{c_+},K\tfrac{N}{c_+}]$.

  In addition, if $\alpha <1/3$, then all terms on the r.h.s. of the
  above inequality tend to 0 as $N\to\infty$.
\end{theorem}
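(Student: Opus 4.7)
The plan is to reduce to Proposition \ref{prop:timedomain1} applied twice, once to each of the two sums in Equation \eqref{eq:zkt_solution_expansion}. Write $z_k(t) = z_k^-(t) + z_k^+(t)$, where
\begin{equation*}
z_k^-(t) = \sum_{m=-N/2}^{N/2} a_m e^{im\theta k}\, e^{\nu_{m+}t}, \qquad z_k^+(t) = \sum_{m=-N/2}^{N/2} b_m e^{im\theta k}\, e^{\nu_{m-}t}.
\end{equation*}
Proposition \ref{prop:timedomain1} applies directly to $z_k^-(t)$ (i.e., setting $b_m=0$), giving the function $f_-(z) = \sum_{|m|<N^\alpha} a_m e^{im\theta z}$ and the bound of Equation \eqref{eq:prop_diff_bound} for $t\in[\tfrac{N}{|c_-|},K\tfrac{N}{|c_-|}]$.

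By Lemma \ref{lemma:nu_m_opposite_signs}, the branch $\nu_{m-}$ has imaginary part of sign opposite to $\nu_{m+}$, and by Lemma \ref{lem:phasevelocity} its associated phase velocity is $c_{m+} = -\Im(\nu_{m-})/(m\theta)$, which tends to $c_{+} = c_{0+}$ from Lemma \ref{lem:phasevelocity2}. The argument of Proposition \ref{prop:timedomain1} therefore goes through verbatim with the roles of $\nu_{m+}$ and $\nu_{m-}$, $a_m$ and $b_m$, $c_-$ and $c_+$ interchanged, producing $f_+(z) = \sum_{|m|<N^\alpha} b_m e^{im\theta z}$ and an analogous bound on $|z_k^+(t) - f_+(k-c_+t)|$ valid for $t\in[\tfrac{N}{c_+},K\tfrac{N}{c_+}]$. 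The only essential change is that the exponential decay factors now involve $|\Re(\nu_{m-})|$ instead of $|\Re(\nu_{m+})|$; this is absorbed into the definition of $\mathcal{C}(a,b)$, which is already taken as the minimum over both signs $\varepsilon\in\{-1,1\}$ and so dominates either individual version.

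The triangle inequality on the intersection $[\tfrac{N}{|c_-|},K\tfrac{N}{|c_-|}]\cap[\tfrac{N}{c_+},K\tfrac{N}{c_+}]$ yields
\begin{equation*}
|z_k(t) - f_-(k-c_-t) - f_+(k-c_+t)| \leq |z_k^-(t) - f_-(k-c_-t)| + |z_k^+(t) - f_+(k-c_+t)|,
\end{equation*}
and adding the two instances of \eqref{eq:prop_diff_bound} gives \eqref{eq:maintheorem}: the first terms combine to give the factor $\tfrac{1}{|c_-|} + \tfrac{1}{c_+}$, and the two exponential terms each get their coefficient doubled (from $2M/(p-1)$ to $4M/(p-1)$).

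For the final claim, observe that the first term on the right-hand side of \eqref{eq:maintheorem} is $\bigoh(N^{3\alpha - 1})$, which tends to $0$ exactly when $\alpha < 1/3$. For the second and third terms, the exponential factors $e^{-\mathcal{C}(\cdot,\cdot)t}$ are at most $1$, and the prefactors $(N^\alpha - 1)^{1-p}$ and $(N^\beta-1)^{1-p}$ both tend to $0$ as $N\to\infty$ because $1-p<0$ (so decay is already provided by the hypothesis $|a_m|,|b_m|<Mm^{-p}$, regardless of whether the exponentials themselves decay). The main subtlety to check is the symmetric application of Proposition \ref{prop:timedomain1} to the $b_m$ sum; everything else is bookkeeping.
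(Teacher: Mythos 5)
Your proposal is correct and follows essentially the same route as the paper: decompose $z_k(t)=z_k^-(t)+z_k^+(t)$, apply Proposition \ref{prop:timedomain1} to the $a_m$ part and its mirror-image analogue (with $\nu_{m-}$, $b_m$, $c_+$) to the other, then combine via the triangle inequality on the intersection of the two time intervals and note $N^{3\alpha-1}\to 0$ for $\alpha<1/3$ together with $(N^\alpha-1)^{1-p},(N^\beta-1)^{1-p}\to 0$. Your extra remarks on why the symmetric application works and why $\mathcal{C}(a,b)$ absorbs both branches are just a more explicit version of what the paper leaves implicit.
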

\begin{proof}
  An analogous result to Proposition \ref{prop:timedomain1} can be
  proved for the case when $a_m=0$ for all $m$. If we write $z_k(t) =
  z^+_k(t)+z^-_k(t)$, where $z^-_k(t)$ has expansion with all $b_m=0$
  and $z^+_k(t)$ has expansion with all $a_m=0$, we have 
\begin{equation*}
|z_k(t) -
  f_-(k-c_-t)-f_+(k-c_+t)| \leq |z^+_k(t) - f_+(k-c_+t)| + |z^-_k(t) -
  f_+(k-c_-t)| .
\end{equation*}
Using Proposition \ref{prop:timedomain1}  and the aforementioned
analogous result to bound the two terms on the right establishes
Equation \ref{eq:maintheorem}.

If $\alpha < 1/3$, then $N^{3\alpha-1} \to 0$ as $N \to \infty$. For $p>1$,
both  $(N^\beta-1)^{1-p} \to 0$ and  $(N^\alpha-1)^{1-p} \to 0$, as $N\to\infty$, which proves that 
all terms on the r.h.s of Equation \ref{eq:maintheorem} go to zero as $N\to\infty$.




\end{proof}

\begin{figure}[pbth]
\center
\includegraphics[height=3.0in]{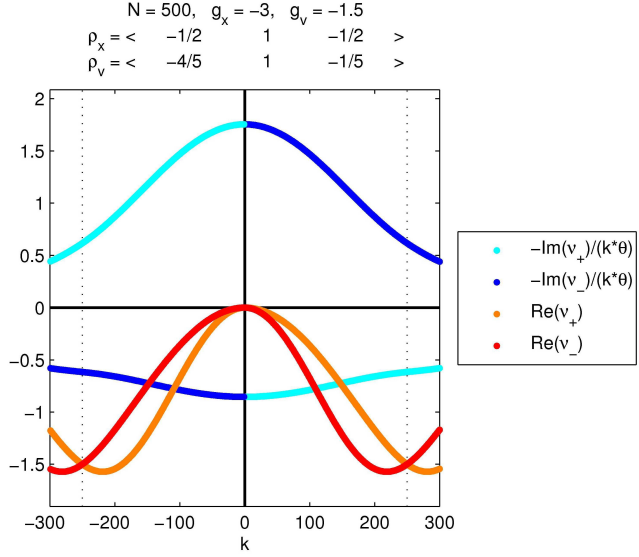}
\caption{(color online) \emph{A representative figure for the calculation of the phase velocities
for 500 agents.
The values of the parameters are given in the figures. Light blue: $\dfrac{-\Im(\nu_{m+})}{m\theta}$, Blue: $\dfrac{-\Im(\nu_{m-})}{m\theta}$,
Orange:  $\Re(\nu_{m+})$, Red: $\Re(\nu_{m-})$. The maximum phase velocities occur at $m=0$,
These are the signal velocities $c_+>0$ and $c_-<0$ of Theorem \ref{theo:timedomain1}. }}
\label{fig:phasevelocities3}
\end{figure}

{\noindent \bf Remark:} It is interesting to note that the signal velocity we determine
is actually equal to the group velocity at $m=0$. The group velocity
is defined as $\dfrac{dc_{m\pm}}{d(m\theta)}$. It is not necessarily
true that group velocity in these kinds of systems equals signal velocity. In the system studied
in \cite{WW} they are different. See \cite{Brillouin} for more information.

{\noindent \bf Remark:} A similar argument as the one in Theorem \ref{theo:timedomain1} easily shows that
eigenfunctions with wave numbers $m$ greater than $N^{0.5+\delta}$ will die out
before $t=N/c_+$. Thus for considerations on time-scales longer than that, these are
irrelevant. It also (conveniently) turns out that very often the greatest
phase velocities are associated with the lowest wave numbers. A typical case
is seen in Figure \ref{fig:phasevelocities3}. One can show that
in those asymptotically stable cases where $\rho_{v,1}$ is close to -1/2, we have
that $c_{m\pm}$ has a local maximum at $m=0$. In fact Lemma \ref{lem:phasevelocity2}
implies that for $\rho_{v,1}=-1/2$:
\begin{equation*}
c_{m\varepsilon}=\varepsilon\,\sqrt{\dfrac{-g_x}{2}}+
\varepsilon m^2\theta^2 \left(\dfrac{g_x-\frac 32 g_v^2}{24\sqrt{-2g_x}} \right)+\cdots
\end{equation*}
which has a local maximum at $m=0$.

\section{Conclusion} \label{chap:four}

Though experiments with cars have been done on circular roads (see \cite{S+}), our interest in
the system with periodic boundary conditions of $S_N^*$ as defined in Definition
\ref{defn:normalized system} stems from the applicability
to traffic systems with non-periodic boundary conditions.
The primary motivation for studying the former is that they enable us to analyze how disturbances
propagate, and --- under the assumption that this propagation does not depend
on boundary conditions --- apply that to the latter systems to find the transients.
Some remarks on how that works are given in the Introduction and is the subject of \cite{CV2}.
A relative novelty here is that we consider all strictly decentralized systems,
not just symmetric ones.

In Section \ref{chap:stability} we give precise conditions on the parameters
so that decentralized systems with periodic boundary condition are asymptotically stable.
In its generality stated here this is new, though related observations have been made in
\cite{BMH} and \cite{LFJ}. The main importance here is that we use these conditions
on the parameters to show that in these systems disturbances travel with constant a
constant signal velocity, and --- as our main result --- we determine that velocity
in Section \ref{chap:signal}. This explains why in these cases, approximations of these systems
with large $N$, by the wave equation are successful (see for example \cite{BMH}).
It can be shown however that for other parameter values diffusive behavior may occur (see \cite{CV2}.

\begin{figure}[t]
\center
\includegraphics[bb=0 0 447 401,height=3.0in]{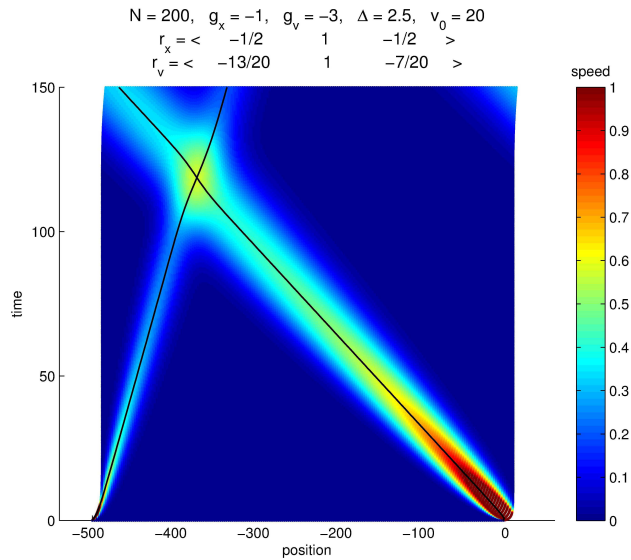}   
\caption{(color online) \emph{The orbits of 200 cars with specific choices for the parameters.
($\Delta$ is the desired distance between cars.)  At time 0 agent 0 receives a different initial condition.
They are color coded according to the velocity of the agent.  The black curves indicate the theoretical
position of the wavefront calculated via the signal velocity. Note that these velocities depend on the
direction, and that the signal velocity is measured in number of cars per time unit. Due to the different
velocities of the cars, these curves are not straight lines.}}
\label{fig:signalvelocity}
\end{figure}

Finally we test our prediction of the signal velocity in a numerical experiment.
Our theory described the error due to approximating the disturbance signal
as having a pair of signal velocities $c_{\pm}$ as a sum of  three terms (see equation \ref{eq:maintheorem}), which asymptotically go to zero for large $N$,
subject to a constraint on the decay of the Fourier coefficients of the initial disturbance.
In this numerical experiment we give agent number $0=N$ at time $t=0$ is a different initial velocity from the others. We note that even though
this type of impulse disturbance does not have the Fourier coefficient decay required by our theory, we nonetheless observe two distinct signal velocities
as predicted.
The result can be seen in Figure \ref{fig:signalvelocity}. That signal
propagates forward (in the direction 1,2,3,..) through the flock as well
as backwards (in the direction $N-1$, $N-2$, $N-3$,...). In figure we color coded
according to the speed of the agents, who are stationary until the signal reaches
them. In black we mark when the signal is predicted to arrive, according to the theoretically predicted signal velocities. One can see the
excellent agreement.


\end{document}